\title[Equivalence of solutions]{Equivalence of solutions to fractional $p$-Laplace type equations}
\author[Korvenp\"a\"a]{Janne Korvenp\"a\"a}
\address{Janne Korvenp\"a\"a\\
Department of Mathematics and Systems Analysis,
Aalto University
\\ P.O. Box 11100
FI-00076 Aalto,
Finland}
\email{janne.korvenpaa@aalto.fi}
\author[Kuusi]{Tuomo Kuusi}
\address{Tuomo Kuusi\\
Department of Mathematics and Systems Analysis,
Aalto University
\\ P.O. Box 11100
FI-00076 Aalto,
Finland}
\email{tuomo.kuusi@aalto.fi}
\author[Lindgren]{Erik Lindgren}
\address{Erik Lindgren \\ Department of Mathematics, KTH, 100 44 Stockholm, Sweden}
\email{eriklin@kth.se}
\newtheorem{theorem}{Theorem}[section]
\newtheorem{proposition}{Proposition}[section]
\newtheorem{lemma}{Lemma}[section]
\theoremstyle{definition}
\newtheorem{definition}{Definition}
\newtheorem{remark}{Remark}[section]
\numberwithin{equation}{section}
\def\eqn#1$#2${\begin{equation}\label#1#2\end{equation}}
\def\charfn_#1{{\raise1.2pt\hbox{$\chi_{\kern-1pt\lower3pt\hbox{{$\scriptstyle#1$}}}$}}}
\newcommand{\eps}{\varepsilon}
\def\dist{\operatorname{dist}}
\def\supp{\operatorname{supp}}
\def\er{\mathbb R}
\newcommand{\ern}{\mathbb{R}^n}
\DeclareMathOperator*{\essliminf}{ess\,lim\,inf}
\def\mean#1{\mathchoice%
          {\mathop{\kern 0.2em\vrule width 0.6em height 0.69678ex depth -0.58065ex
                  \kern -0.8em \intop}\nolimits_{\kern -0.4em#1}}%
          {\mathop{\kern 0.1em\vrule width 0.5em height 0.69678ex depth -0.60387ex
                  \kern -0.6em \intop}\nolimits_{#1}}%
          {\mathop{\kern 0.1em\vrule width 0.5em height 0.69678ex
              depth -0.60387ex
                  \kern -0.6em \intop}\nolimits_{#1}}%
          {\mathop{\kern 0.1em\vrule width 0.5em height 0.69678ex depth -0.60387ex
                  \kern -0.6em \intop}\nolimits_{#1}}}
\def\vintslides_#1{\mathchoice%
          {\mathop{\kern 0.1em\vrule width 0.5em height 0.697ex depth -0.581ex
                  \kern -0.6em \intop}\nolimits_{\kern -0.4em#1}}%
          {\mathop{\kern 0.1em\vrule width 0.3em height 0.697ex depth -0.604ex
                  \kern -0.4em \intop}\nolimits_{#1}}%
          {\mathop{\kern 0.1em\vrule width 0.3em height 0.697ex depth -0.604ex
                  \kern -0.4em \intop}\nolimits_{#1}}%
          {\mathop{\kern 0.1em\vrule width 0.3em height 0.697ex depth -0.604ex
                  \kern -0.4em \intop}\nolimits_{#1}}}
\newcommand{\aveint}[2]{\mathchoice%
          {\mathop{\kern 0.2em\vrule width 0.6em height 0.69678ex depth -0.58065ex
                  \kern -0.8em \intop}\nolimits_{\kern -0.45em#1}^{#2}}%
          {\mathop{\kern 0.1em\vrule width 0.5em height 0.69678ex depth -0.60387ex
                  \kern -0.6em \intop}\nolimits_{#1}^{#2}}%
          {\mathop{\kern 0.1em\vrule width 0.5em height 0.69678ex depth -0.60387ex
                  \kern -0.6em \intop}\nolimits_{#1}^{#2}}%
          {\mathop{\kern 0.1em\vrule width 0.5em height 0.69678ex depth -0.60387ex
                  \kern -0.6em \intop}\nolimits_{#1}^{#2}}}
\newtoks\by
\newtoks\paper
\newtoks\book
\newtoks\jour
\newtoks\yr
\newtoks\pages
\newtoks\vol
\newtoks\publ
\def\name[#1, #2]{#1 #2}
\def\ota{{\hbox{\bf ???}}}
\def\cLear{\by=\ota\paper=\ota\book=\ota\jour=\ota\yr=\ota
\pages=\ota\vol=\ota\publ=\ota}
\def\endpaper{\the\by, \textit{\the\paper},
{\the\jour} \textbf{\the\vol} (\the\yr), \the\pages.\cLear}
\def\endbook{\the\by, \textit{\the\book},
\the\publ, \the\yr.\cLear}
\def\endpap{\the\by, \textit{\the\paper}, \the\jour.\cLear}
\def\endproc{\the\by, \textit{\the\paper}, \the\book, \the\publ,
\the\yr, \the\pages.\cLear}
\def\vs{\vspace{0.85mm}}
\numberwithin{equation}{section}
\newcommand{\e}{\varepsilon}
\newcommand{\R}{\mathbb{R}}
\newcommand{\dd}{\partial}
\newcommand{\lap}{\Delta}
\renewcommand{\div}{\operatorname{div}}
\renewcommand{\L}{\mathcal{L}}
\newcommand{\K}{\operatorname{Ker}(\Lambda)}
\begin{document}


\begin{abstract}

In this paper, we study different notions of solutions of nonlocal and nonlinear equations of fractional $p$-Laplace type
\[
{\rm P.V.} \int_{\ern}\frac{|u(x)-u(y)|^{p-2}(u(x)-u(y))}{|x-y|^{n+sp}}\,dy = 0.
\]
Solutions are defined via integration by parts with test functions, as viscosity solutions or via comparison. Our main result states that for bounded solutions, the three different notions coincide.

\smallskip
\noindent
\textbf{Keywords.} Nonlocal operators, fractional Sobolev spaces, fractional $p$-Laplacian, viscosity solutions
\end{abstract}
\maketitle

\section{Introduction}

We are concerned with different notions of solutions of nonlocal and nonlinear equations modeled by the fractional $p$-Laplace equation
\begin{equation}\label{eq:fracp}
{\rm P.V.} \int_{\ern}\frac{|u(x)-u(y)|^{p-2}(u(x)-u(y))}{|x-y|^{n+sp}}\,dy = 0,
\end{equation}
where $s \in (0,1)$ and $p>1$.
In recent years there has been a surge of interest around equations related to \eqref{eq:fracp}. The aim of this paper is to prove that, under reasonable assumptions, the different notions of solutions to~\eqref{eq:fracp} are equivalent. The three different notions we aim to treat are:

\begin{enumerate}
\item \emph{Weak} solutions of \eqref{eq:fracp} as in Definition~\ref{def:weaksuper}. These arise naturally as minimizers of the Gagliardo seminorm, i.e., minimizers of 
\[
\int_{\ern}\int_{\ern}\frac{|u(x)-u(y)|^{p}}{|x-y|^{n+sp}}\,dx dy,
\]
and the concept of being a solution is defined trough the first variation. The direct methods of the calculus of variations easily provide existence and uniqueness.

\smallskip

\item The potential-theoretic $(s,p)$-\emph{harmonic} functions as in Definition~\ref{def:superharmonic}. These are defined via comparison with weak solutions and they naturally arise for example in the Perron method.

\smallskip

\item \emph{Viscosity} solutions of \eqref{eq:fracp} as in Definition~\ref{def:viscosity super}. The notion of viscosity solutions is based on the pointwise evaluation of the principal value appearing in~\eqref{eq:fracp}.
\end{enumerate}

Instead of~\eqref{eq:fracp}, we consider more general type equations
\[
{\rm P.V.} \int_{\ern}|u(x)-u(y)|^{p-2}(u(x)-u(y))K(x,y)\,dy = 0,
\]
where the kernel $K$ has growth similar to $|x-y|^{-n-sp}$, see Section \ref{s.defs} for precise definitions.
The weak solutions in (1), as well as potential-theoretic $(s,p)$-harmonic functions in (2) are well-defined for very general, merely measurable kernels, since there is a natural weak formulation behind as soon as $K(\cdot,\cdot)$ is symmetric. However, in order to obtain the equivalence between different notions of solutions, we are forced to assume that $K$ is translation invariant. This is essentially a necessity, as explained already in \cite{Sil06}. We call the collection of suitable kernels as $\K$, where $\Lambda$ is measuring the ellipticity.

The nonlocal integro-differential equation~\eqref{eq:fracp} can be seen as a fractional nonlocal counterpart to the usual $p$-Laplace equation
\[
\lap_p u =\div(|\nabla u|^{p-2}\nabla u )=0.
\]
In fact, it is shown in~\cite{IN10} that the fractional viscosity solutions of~\eqref{eq:fracp} converge to the ones of $p$-Laplacian as $s\to 1$. In the case of the $p$-Laplace equation, the equivalence of solutions was first proved in \cite{JLM01} (see also \cite{Lin86}), and a shorter proof was recently given in \cite{JJ12}. It is notable that both in~\cite{JLM01} and in~\cite{JJ12} there is a need for a technical regularization procedure via infimal convolutions, which can be completely avoided in the nonlocal case. 

\smallskip

Our main result, using the recent results in \cite{DKKP15} and \cite{KKP15}, states that solutions defined via comparison and viscosity solutions are exactly the same for the class of kernels $\K$, see Section~\ref{s.defs}. 

\begin{theorem}\label{thm:equivalence0} \label{thm:viscsuperharm} \label{thm:superharmvisc}
Suppose that the kernel $K$ belongs to $\K$. Then, a function $u$ is $(s,p)$-superharmonic in $\Omega$ if and only if it is an $(s,p)$-viscosity supersolution in $\Omega$.
\end{theorem}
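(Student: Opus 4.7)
\emph{Overall approach.} Both implications are proved by contradiction. The technical bridge between the two formulations is the comparison principle between weak sub/supersolutions and $(s,p)$-superharmonic functions, together with the interior regularity of weak solutions; both ingredients are supplied by~\cite{DKKP15, KKP15}.

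\emph{Direction (i): $(s,p)$-superharmonic implies $(s,p)$-viscosity supersolution.} Suppose the viscosity inequality fails at some $x_{0}\in\Omega$: a smooth $\varphi$ lies below $u$ in a ball $B_{r}(x_{0})$ with $\varphi(x_{0})=u(x_{0})$, and the pointwise operator value at $x_{0}$ is strictly negative. For small $\delta>0$ and $r'<r$ set
\[
\psi_{\delta}(x)=
\begin{cases}
\varphi(x)+\delta, & x\in B_{r'}(x_{0}),\\
u(x), & x\in\R^{n}\setminus B_{r'}(x_{0}).
\end{cases}
\]
Because $\varphi\in C^{2}$ near $x_{0}$ and $K\in\K$ is translation invariant, the nonlocal operator applied to $\psi_{\delta}$ is continuous in $x$ on a slightly smaller ball $B_{r''}(x_{0})$ and, for $\delta$ and $r'$ small enough, remains strictly negative there, so $\psi_{\delta}$ is a classical and hence weak subsolution. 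Since $\psi_{\delta}\leq u$ on $\R^{n}\setminus B_{r''}$, the weak comparison principle against the $(s,p)$-superharmonic function $u$ forces $\psi_{\delta}\leq u$ inside $B_{r''}$, contradicting $\psi_{\delta}(x_{0})=u(x_{0})+\delta$.

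\emph{Direction (ii): $(s,p)$-viscosity supersolution implies $(s,p)$-superharmonic.} If $u$ violated the comparison property, there would be $D\Subset\Omega$ and a weak solution $h$ in $D$ with $h\leq u$ on $\R^{n}\setminus D$ but $h>u$ somewhere in $D$. Lower semicontinuity of $u$ and continuity of $h$ from~\cite{DKKP15} yield $m:=\inf_{D}(u-h)<0$, attained at some $x_{0}\in D$, and $h+m\leq u$ globally, with equality at $x_{0}$ and strict inequality on $\R^{n}\setminus D$. The pointwise regularity of weak solutions from~\cite{DKKP15, KKP15} makes $h+m$ a legitimate viscosity test function in a ball $B_{r}(x_{0})\subset D$. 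The completed function
\[
\psi(x)=
\begin{cases}
h(x)+m, & x\in B_{r}(x_{0}),\\
u(x), & x\in\R^{n}\setminus B_{r}(x_{0}),
\end{cases}
\]
satisfies $\psi\geq h+m$ everywhere, with equality in $B_{r}(x_{0})$ and strict inequality outside $D$. The viscosity supersolution condition forces the pointwise operator applied to $\psi$ at $x_{0}$ to be nonnegative. On the other hand, since $\psi(x_{0})=(h+m)(x_{0})$ and $\psi\geq h+m$ with strict excess on a set of positive measure, strict monotonicity of $t\mapsto|t|^{p-2}t$ and a term-by-term comparison show that the operator on $\psi$ at $x_{0}$ is strictly smaller than the operator on $h+m$ at $x_{0}$, which equals zero because $h$ satisfies the equation pointwise at $x_{0}$. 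This is the desired contradiction.

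\emph{Main obstacle.} The serious difficulty lies in Direction (ii): the natural test function $h+m$ is only H\"older continuous and cannot be substituted directly into the viscosity definition. The resolution leans on the pointwise regularity and solvability of weak solutions established in~\cite{DKKP15, KKP15}. Once that regularity is available the nonlocal tail does the remaining work for free via the monotonicity of $t\mapsto|t|^{p-2}t$, producing the strict inequality and replacing the inf-convolution regularization needed in the local arguments of~\cite{JLM01, JJ12}.
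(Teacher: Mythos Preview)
Your Direction (i) is essentially the paper's argument: assume $\L\phi_r(x_0)<0$, use continuity of $\L\phi_r$ to get a strict subsolution in a small ball, perturb upward, invoke comparison with the $(s,p)$-superharmonic $u$, and contradict the touching. Two technical points you glossed over: (a) the paper first reduces to $u_M=\min\{u,M\}$ so that $\phi_r\in L^{p-1}_{sp}(\R^n)$ and the continuity lemmas apply; (b) the upward perturbation must be a compactly supported bump $\theta\eta$, not a flat constant $\delta$, so that $\psi\le u$ on the complement of a \emph{small} ball is automatic---with your $\psi_\delta=\varphi+\delta$ on $B_{r'}$ there is no reason why $\varphi+\delta\le u$ on the annulus $B_{r'}\setminus B_{r''}$. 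These are fixable and do not change the strategy.

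Direction (ii), however, has a genuine gap. The viscosity test functions in Definition~\ref{def:viscosity super} must be $C^2$ (and, when $1<p\le \tfrac{2}{2-s}$ with vanishing gradient, even of class $C^2_\beta$). Weak solutions of \eqref{eq:maineq} are only H\"older continuous; neither \cite{DKKP15} nor \cite{KKP15} gives $C^2$ regularity, so $h+m$ is \emph{not} a legitimate test function, and you cannot evaluate $\L(h+m)(x_0)$ pointwise to conclude it equals zero. Your ``Main obstacle'' paragraph correctly identifies the difficulty but then resolves it by appealing to regularity that does not exist. The paper takes a completely different route here: it first uses Direction (i) to deduce that the continuous weak solution $v$ is an $(s,p)$-viscosity \emph{subsolution}, and then applies a comparison principle \emph{between viscosity sub- and supersolutions} (Theorem~\ref{thm:comparison}), proved by the doubling-of-variables technique $\Psi_\eps(x,y)=v(x)-u(y)-\eps^{-1}|x-y|^q$ together with the translation invariance of $K$. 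That viscosity comparison principle is the substantive new ingredient and is precisely what replaces the unavailable $C^2$ regularity of $h$.
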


\smallskip 

In case that the supersolution is bounded or belongs to the right Sobolev space, we have the full equivalence result.
\begin{theorem}\label{thm:equivalence}
Suppose that the kernel $K$ belongs to $\K$. 
Assume that $u$ is locally bounded from above in $\Omega$ or $u \in W^{s,p}_{\rm loc}(\Omega)$. Then the following statements are equivalent:
\begin{enumerate}

\item $u$ is the lower semicontinuous representative of a weak supersolution in $\Omega$.

\item $u$ is $(s,p)$-superharmonic in $\Omega$.

\item $u$ is an $(s,p)$-viscosity supersolution in $\Omega$.

\end{enumerate}
\end{theorem}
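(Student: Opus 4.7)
The plan is to leverage Theorem \ref{thm:equivalence0}, which already supplies the equivalence $(2)\Leftrightarrow(3)$ for every kernel in $\K$ without any boundedness hypothesis, and then prove the remaining equivalence $(1)\Leftrightarrow(2)$ using the additional boundedness-from-above or $W^{s,p}_{\rm loc}$-hypothesis together with the comparison and obstacle-problem machinery developed in \cite{DKKP15} and \cite{KKP15}.

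For the direction $(1)\Rightarrow(2)$, I would take the lower semicontinuous representative $u$ of a weak supersolution in $\Omega$, which exists because weak supersolutions admit a canonical lsc representative in this fractional setting. The task is then to verify the defining properties of an $(s,p)$-superharmonic function in the sense of Definition \ref{def:superharmonic}: local boundedness from below (immediate from lsc on compact sets) and a comparison property against weak solutions in smooth subdomains $D\Subset\Omega$. The latter reduces to the comparison principle for weak sub- and supersolutions, which is available from \cite{KKP15}. This half is essentially soft once the comparison tool is invoked.

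The serious work is in $(2)\Rightarrow(1)$. Here I would argue by truncation and an obstacle-problem approximation. Set $u_k:=\min\{u,k\}$ and, for each $D\Subset\Omega$, let $v_k$ be the unique minimizer of the Gagliardo-type functional among functions that lie above $u_k$ in $D$ and equal $u_k$ in $\R^n\setminus D$. By the standard theory of the fractional obstacle problem, $v_k$ is a weak supersolution in $D$ and a weak solution in the non-contact set $\{v_k>u_k\}$. Using the superharmonicity of $u$, comparison between $v_k$ and $u_k$ (both inside $D$ and on the tail, which is the same for the two functions by construction) forces $v_k=u_k$, so $u_k$ itself is a weak supersolution in $\Omega$. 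To pass from $u_k$ to $u$ I would invoke monotone convergence together with the energy estimates and convergence theorems of \cite{DKKP15}: if $u$ is locally bounded from above then $u_k=u$ on compact subsets for $k$ large, while if $u\in W^{s,p}_{\rm loc}(\Omega)$ uniform bounds on the Gagliardo seminorms of $u_k$ permit passage to the limit in the weak formulation.

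The hard part, as is standard in this setting, is the truncation step, and specifically the uniform control of the nonlocal tail contribution
\[
\int_{\R^n}\frac{(u_k(y)-u_k(x))_-^{\,p-1}}{|x-y|^{n+sp}}\,dy
\]
as $k\to\infty$. Without a bound from above on $u$ or a $W^{s,p}_{\rm loc}$ assumption, the positive part of $u$ need not produce a finite $(s,p)$-tail, and the obstacle-problem comparison that identifies $v_k$ with $u_k$ breaks down. This is precisely why the hypothesis of Theorem \ref{thm:equivalence} is imposed, and I expect the proof to lean heavily on the tail estimates and obstacle-problem results of \cite{KKP15} to close this gap.
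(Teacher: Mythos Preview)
Your plan is essentially correct, but you are working much harder than necessary. The paper's proof is a single line: it invokes Theorem~\ref{thm:equivalence0} for $(2)\Leftrightarrow(3)$, and then directly cites the results recalled in Section~\ref{s.defs} as Theorem~\ref{thm_supersuper} (an $(s,p)$-superharmonic function that is locally bounded or in $W^{s,p}_{\rm loc}$ is a weak supersolution) and Theorem~\ref{thm_supersuper2} (the lsc representative of a weak supersolution is $(s,p)$-superharmonic). These are precisely \cite[Theorem~1(iv)]{DKKP15} and \cite[Theorem~12]{DKKP15}, and they are treated here as black boxes.

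What you sketch for $(1)\Rightarrow(2)$ and $(2)\Rightarrow(1)$ --- comparison against weak solutions for the first direction, and truncation plus obstacle-problem approximation for the second --- is in fact the strategy \emph{behind} those cited results in \cite{DKKP15}. So your approach is not wrong; it is a re-derivation of theorems the paper simply imports. The extended discussion of tail control as ``the hard part'' is accurate for the underlying analysis in \cite{DKKP15}, but from the standpoint of the present paper that difficulty has already been absorbed into the cited statements and does not reappear in the proof of Theorem~\ref{thm:equivalence}.
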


In particular, by the theorem above we obtain that a continuous bounded energy solution is a viscosity solution. Moreover, if a weak solution is trapped between two functions that are regular enough, then the principal value in~\eqref{eq:fracp} is well-defined and zero, see Proposition~\ref{prop:principal value}. As a matter of fact, Proposition~\ref{prop:principal value} and the main theorems above assert that if a lower semicontinuous supersolution touches a smooth function from above, then the principal value exists at that point and is nonnegative. 

From the very definition of viscosity supersolutions we see that there are no integrability or differentiability assumptions on them. In view of Theorem~\ref{thm:equivalence0}, we may directly apply~\cite[Theorem 1]{DKKP15} to obtain the following useful result for viscosity supersolutions, which is, starting from the definition of them, not completely obvious. 
 \begin{theorem} \label{thm:viscosity super}
Suppose that the kernel $K$ belongs to $\K$. If $u$ is an $(s,p)$-viscosity supersolution in an open set $\Omega$, then it has the following properties: 
\begin{itemize}
\item[(i)] {\bf Pointwise behavior}.
\begin{equation*} 
u(x)= \liminf_{y \to x} u(y) = \essliminf_{y \to x}u(y) \quad \text{for every } x\in\Omega.
\end{equation*}
\item[(ii)] {\bf Summability}. For 
\begin{equation*}  \label{e.bar qt}
\bar t := \begin{cases}
  \frac{n(p-1)}{n-sp}, & 1<p< \frac ns,   \\[1ex]
   +\infty , & p \geq \frac{n}s,
\end{cases} \qquad \bar q := \min\left\{\frac{n(p-1)}{n-s},p\right\},
\end{equation*}
and  $h \in (0,s)$, $t \in (0,\bar t)$ and $q \in (0,\bar q)$, $u \in W_{\rm loc}^{h,q}(\Omega) \cap L_{\rm loc}^{t}(\Omega) \cap L_{sp}^{p-1}(\R^n)$. 
\end{itemize}
\end{theorem}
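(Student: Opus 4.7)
The plan is to derive Theorem~\ref{thm:viscosity super} as an immediate consequence of Theorem~\ref{thm:equivalence0} combined with the regularity theory for $(s,p)$-superharmonic functions developed in~\cite{DKKP15}. Given an $(s,p)$-viscosity supersolution $u$ in $\Omega$, I would first apply Theorem~\ref{thm:equivalence0} to conclude that $u$ is $(s,p)$-superharmonic in $\Omega$ in the sense of Definition~\ref{def:superharmonic}. This reduction is the only nontrivial step in the argument, and it is precisely where the hypothesis $K\in\K$ enters, since the notion of superharmonic function requires comparison against weak solutions, which in turn needs translation invariance of the kernel.

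Once this reduction is in place, both~(i) and~(ii) follow by directly quoting \cite[Theorem~1]{DKKP15}, which is formulated exactly for $(s,p)$-superharmonic functions, with the same threshold exponents $\bar t$ and $\bar q$ and the same admissible ranges of $h$, $t$, and $q$. In particular, no further estimates need to be performed: the lower semicontinuous / essentially-lower-semicontinuous pointwise representation in~(i) and the local fractional Sobolev and Lebesgue integrability in~(ii) come packaged together in the cited statement, and the thresholds in Theorem~\ref{thm:viscosity super} have been set up precisely to match those of~\cite{DKKP15}.

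The main point one has to verify is the consistency between the definition of $(s,p)$-superharmonic function adopted in this paper (Definition~\ref{def:superharmonic}, based on comparison with continuous weak solutions) and the one used in~\cite{DKKP15}. Provided the two definitions coincide, which is essentially automatic since both rely on the same comparison characterization, the theorem is a formal corollary of Theorem~\ref{thm:equivalence0} and~\cite[Theorem~1]{DKKP15}. Thus the only genuine work is hidden inside Theorem~\ref{thm:equivalence0}; the present statement is a short deduction from results already at our disposal, which is why it is stated as a corollary-type theorem in the introduction rather than requiring its own substantial argument.
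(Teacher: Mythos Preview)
Your proposal is correct and follows exactly the paper's own argument: the proof of Theorem~\ref{thm:viscosity super} is the one-line deduction from Theorem~\ref{thm:equivalence0} and \cite[Theorem~1]{DKKP15}. Your remark about the consistency of the two definitions of $(s,p)$-superharmonic functions is also addressed in the paper (via the remark after Definition~\ref{def:superharmonic}, which invokes \cite[Theorem~1(iii)]{DKKP15}), so nothing further is needed.
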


\subsection{Some known results}

To the best of our knowledge, equations of the type \eqref{eq:fracp} were first considered in \cite{IN10}, where viscosity solutions are studied. Existence, uniqueness, and the convergence to the $p$-Laplace equation as the parameter $s\to 1$, are proved. There is a slight technical difference of the equations studied therein, which is that the kernel only has support in a ball of radius $\rho(x)$. In \cite{CLM12}, weak solutions of the equation are studied, mostly for $p$ large, in connection with optimal H\"older extensions.

When it comes to regularity theory, there are many recent results. In \cite{DKP15} and \cite{DKP14}, local H\"older regularity of weak solutions is studied, in the flavor of  De Giorgi-Nash-Moser. In \cite{Lin14} the local H\"older regularity for viscosity solutions is studied. Both of these studies were preceded and inspired by the methods and results in \cite{Kas09} and \cite{Sil06}, respectively. Very recently, the sharp regularity up to the boundary was obtained in \cite{IMS15}. We also seize the opportunity to mention the paper \cite{KMS15b}, in which regularity properties for equations like \eqref{eq:fracp} with a measure are studied. There has also been some recent progress in terms of higher integrability results; in \cite{KMS14}, \cite{KMS15}, and \cite{Coz15} the case $p=2$ is treated. In \cite{Sch15} and \cite{BL15} the general case when $p\geq 2$ is considered.

Worth mentioning is also that there are other ways of defining a nonlocal or a fractional version of the $p$-Laplace equation by developing the terms and then replacing this with a suitable nonlocal operator. This is the direction taken in \cite{BCF12a}, \cite{BCF12b}, and \cite{CJ15}. In \cite{BCF12a} and \cite{BCF12b} an interesting connection to a nonlocal tug-of-war is found and in \cite{CJ15} a connection to L\'evy processes is made. Note that the operators considered in these papers differ substantially from the operators considered in the present paper.

As mentioned earlier, the literature on equations like \eqref{eq:fracp} has literally exploded in recent years, hence we do not, in any way, claim to give a full account here.

\smallskip
The paper is organized as follows. In Section \ref{s.defs} below, we introduce the definitions and notations stating also some recent results. In Section \ref{Sec1}, we prove several properties for fractional $p$-Laplace type operators from the viscosity solution point of view. 
In Section \ref{s.comp}, we state and prove a weak comparison principle for viscosity solutions, which is one of the keys to our main result.
Finally, Section \ref{s.main} is devoted for proving the main result. 

\section{Definitions and notation} \label{s.defs}

In this section, we introduce and define the different notions of solutions and also some notation.

\subsection*{General notation}
Throughout the paper, $\Omega$ will denote a bounded and open set in $\ern$. If an open set $D$ is compactly contained in $\Omega$, we will write $D \Subset \Omega$. We will denote by $B_r(x)$, the ball of radius $r$ centered at the point $x$. When the center is the origin, we will suppress it from the notation: $B_r \equiv B_r(0)$. The positive and the negative part of a function $u$ will be used several times, defined as 
$$
u_+=\max\{u,0\},\quad u_-=\max\{-u,0\}.
$$

\smallskip

\subsection*{Kernels} Let us then give the description of suitable kernels. We say that the kernel $K\colon \ern \times \ern \to (0,\infty]$ belongs to $\K$, if it has the following four properties:
\begin{itemize}
\item[(i)] Symmetry: $K(x,y)=K(y,x)$ for all $x,y \in \ern$.

\item[(ii)] Translation invariance: $K(x+z,y+z)=K(x,y)$ for all $x,y,z \in \ern$, $x \neq y$.

\item[(iii)] Growth condition:
$
\Lambda^{-1} \leq K(x,y)|x-y|^{n+sp} \leq \Lambda $ for all $x,y \in \ern$, $x \neq y$. 

\item[(iv)] Continuity: The map $x \mapsto K(x,y)$ is continuous in  $\R^n \setminus \{y\}$. 
\end{itemize}
Above $\Lambda \geq 1$ is a constant. 
Notice that by symmetry also $y \mapsto K(x,y)$ is continuous in $\R^n \setminus \{x\}$ if $K \in \K$. In the case of the fractional $p$-Laplace equation, $K(x,y)=|x-y|^{-n-sp}$.  

\smallskip

 For $K(x,0)$ we often write $K(x)$, in short.
The operator $\L$ will denote the nonlocal and nonlinear operator associated with the kernel $K$ as follows
\begin{align}\label{eq:L fl}
\L u(x) := {\rm P.V.} \int_{\ern}|u(x)-u(y)|^{p-2}(u(x)-u(y))K(x,y)\,dy.
\end{align}
Here the symbol P.V. means ``in the principal value sense" and is often omitted when it is clear from the context.

\smallskip

\subsection*{Fractional Sobolev spaces}
A central role will naturally be played by the so-called
fractional Sobolev spaces (also known as Aronszajn, Gagliardo, or Slobodeckij spaces)
$W^{s,p}(\R^n)$ with $0<s<1$ and $1<p<\infty$. The norm is defined through
$$
\|u\|_{W^{s,p}(\R^n)}=  \|u\|_{L^{p}(\R^n)} + [u]_{W^{s,p}(\R^n)}, 
$$
where the quantity
$$
[u]_{W^{s,p}(\R^n)}=\left(\int_{\R^n}\int_{\R^n} \frac{|u(x)-u(y)|^{p}}{|x-y|^{n+sp}}\,dx dy\right)^\frac1p
$$
is called the Gagliardo seminorm of $u$.
The space $W^{s,p}(\Omega)$ is defined similarly and, as usual, $W^{s,p}_0(\Omega)$ is defined as the closure of $C_0^\infty(\Omega)$ with respect to the norm $\|\cdot \|_{W^{s,p}(\Omega)}$. We refer to the ``Hitchhiker's Guide to the Fractional Sobolev Spaces'', \cite{NPV12}, for most of the properties of fractional Sobolev spaces used in this paper.

\smallskip

\subsection*{Tail spaces}
Terms that we will refer to as tails appear often in nonlocal settings, and therefore we define
\begin{equation} \label{def_tail} 
{\rm Tail}(f;z,r) := \bigg(r^{sp} \int_{\ern \setminus B_r(z)} |f(x)|^{p-1} |x-z|^{-n-sp}\,dx \bigg)^{\frac{1}{p-1}}.
\end{equation}
The ``tail space" is defined accordingly
\begin{equation} \label{def_tailspace'}
L^{p-1}_{sp}(\R^n) = \Big\{ f \in L_{\rm loc}^{p-1}(\ern) : \int_{\ern} |f(x)|^{p-1} (1+|x|)^{-n-sp} \,dx < \infty \Big\},
\end{equation}
and it is easy to see that if $fÊ\in L^{p-1}_{sp}(\R^n)$, then ${\rm Tail}(f;z,r)$ is finite for all $z \in \R^n$ and $r \in (0,\infty)$. 
\smallskip

\subsection*{Notions of solutions}

We next introduce the different notions of solutions of the equation
\begin{equation}
\label{eq:maineq}
\L u = 0 \quad \text{in $\Omega$}.
\end{equation}

\begin{definition} \label{def:weaksuper}
A function $u\in W_{\textup{loc}}^{s,p}(\Omega) \cap L^{p-1}_{sp}(\ern)$ is a \emph{weak supersolution} of \eqref{eq:maineq} if
\[
\int_{\ern}\int_{\ern}|u(x)-u(y)|^{p-2}(u(x)-u(y))(\phi(x)-\phi(y))K(x,y)\,dx dy\geq 0
\]
for all nonnegative $\phi\in C_0^\infty(\Omega)$.
\end{definition}
A function $u$ is a \emph{weak subsolution} of \eqref{eq:maineq} if $-u$ is a weak supersolution. Moreover, $u$ is a \emph{weak solution} of \eqref{eq:maineq} if it is both a weak supersolution and a subsolution, or equivalently
\[
\int_{\ern}\int_{\ern}|u(x)-u(y)|^{p-2}(u(x)-u(y))(\phi(x)-\phi(y))K(x,y)\,dx dy = 0
\]
for all $\phi\in C_0^\infty(\Omega)$.

\begin{definition} \label{def:superharmonic}
We say that a function $u\colon\ern \to [-\infty,\infty]$ is an $(s,p)$-\emph{superharmonic} function in $\Omega$ if it satisfies the following four assumptions.
\begin{itemize}
\item[(i)] $u<+\infty$ almost everywhere in $\R^n$, and $u> -\infty$ everywhere in~$\Omega$. 
\item[(ii)] $u$ is lower semicontinuous in~$\Omega$.
\item[(iii)] $u$ satisfies the comparison in $\Omega$ against solutions, that is, if $D \Subset \Omega$ is an open set and $v \in C(\overline{D})$ is a weak solution in $D$ such that $u \geq v$ on $\partial D$ and almost everywhere in~$\R^n \setminus D$, then $u \geq v$ in $D$.
\item[(iv)] $u_-$ belongs to $L_{sp}^{p-1}(\ern)$.
\end{itemize}
\end{definition}
A function $u$ is $(s,p)$-\emph{subharmonic} in $\Omega$ if $-u$ is $(s,p)$-\emph{superharmonic} in $\Omega$. Moreover, $u$ is $(s,p)$-\emph{harmonic} in $\Omega$ if it is both $(s,p)$-subharmonic and $(s,p)$-superharmonic.

\begin{remark}
In the definition of $(s,p)$-superharmonic functions in \cite{DKKP15}, the comparison condition (iii) is against solutions that are bounded from above in $\ern$. However, our class of $(s,p)$-superharmonic functions is exactly the same according to \cite[Theorem 1(iii)]{DKKP15}.
\end{remark}

In order to define the notion of viscosity solutions for exponents in the range $p\leq \frac{2}{2-s}$, we need a more restricted class of test functions. Indeed, in this range the operator is singular, in the sense that it is not well defined even on smooth functions. For example, defining
\begin{equation*} 
u(x)=
\begin{cases} 
|x|^2, & x \in B_1, \\
1,  & x \in \R^n \setminus B_1,
\end{cases}
\end{equation*}
which is smooth close to origin, we have that the principal value $\mathcal L u(0)$ is finite if and only if~$p>\frac{2}{2-s}$.

When $x_0$ is an isolated critical point, in essence we would like to test viscosity solutions by merely using functions of the type $|x-x_0|^\beta$. However, we need some flexibility in the choice of test functions and this motivates the definition of $C^{2}_{\beta}$ below. One should anyway keep in mind that the space $C^{2}_{\beta}$ contains monomials like $|x-x_0|^\beta$ plus suitable perturbations. 

Let us introduce some notation. The set of critical points of a differentiable function $u$ and the distance from the critical points are denoted by
\[
N_u:=\{x \in \Omega : \nabla u(x)=0\}, \quad d_u(x):=\dist(x,N_u),
\]
respectively. Let $D \subset \Omega$ be an open set. We denote the class of $C^{2}$-functions whose gradient and Hessian are controlled by $d_u$ as
\begin{equation} \label{eq:C_beta^2}
C^{2}_{\beta}(D) := \left\{u \in C^{2}(D) : \sup_{x\in D} \left( \frac{\min\{d_u(x),1\}^{\beta-1}}{|\nabla u(x)|} +  \frac{|D^{2}u(x)|}{d_u(x)^{\beta-2}} \right) < \infty \right\}.
\end{equation}
The supremum in the definition is denoted by $\| \cdot \|_{C^{2}_{\beta}(D)}$.
Notice that, in particular, when $\beta \geq 2$, the function $\phi(x)=|x|^{\beta}$ is in the class $C^{2}_{\beta}$.

\begin{definition} \label{def:viscosity super}
We say that a function $u\colon \ern \to [-\infty,\infty]$ is an $(s,p)$-\emph{viscosity supersolution} in $\Omega$ if it satisfies the following four assumptions.
\begin{itemize}
\item[(i)] $u<+\infty$ almost everywhere in $\R^n$, and $u> -\infty$ everywhere in~$\Omega$.
\item[(ii)] $u$ is lower semicontinuous in $\Omega$.
\item[(iii)]
If $\phi \in C^{2}(B_r(x_0))$ for some $B_r(x_0) \subset \Omega$ such that $\phi(x_0)=u(x_0)$ and $\phi \leq u$ in $B_r(x_0)$, and one of the following holds
\begin{itemize}
\item[(a)]
$p>\frac{2}{2-s}$ or $\nabla \phi(x_0) \neq 0$,
\item[(b)]
$1<p \leq \frac{2}{2-s}$, $\nabla \phi(x_0) = 0$ such that $x_0$ is an isolated critical point of $\phi$, and $\phi \in C^{2}_{\beta}(B_r(x_0))$ for some $\beta > \tfrac{sp}{p-1}$,
\end{itemize}
then $\L \phi_r(x_0) \geq 0$, where
\[
\phi_r(x)=\begin{cases}
\phi(x), &  x \in B_r(x_0),  \\
u(x), & x \in \ern \setminus B_r(x_0).
\end{cases}
\]
\item[(iv)] $u_-$ belongs to $L_{sp}^{p-1}(\ern)$.
\end{itemize}
\end{definition}
A function $u$ is an $(s,p)$-\emph{viscosity subsolution} in $\Omega$ if $-u$ is an $(s,p)$-viscosity supersolution. Moreover, $u$ is an $(s,p)$-\emph{viscosity solution} in $\Omega$ if it is both an $(s,p)$-viscosity supersolution and a subsolution.

\smallskip

Let us say a few more words about the case $1<p \leq \frac{2}{2-s}$. Observe that if $x_0$ is a critical point of $\phi$, but not isolated, there is no test for $u$ at such points. 

\begin{remark} One might be tempted to allow only test functions with non-vanishing gradient at the testing point. However, this will, in constrast to the local case, lead to false solutions. Indeed, with this class of test functions, any function that is constant in $\Omega$ will be a solution to the equation, no matter what boundary values it takes outside of~$\Omega$. 
\end{remark}

\setcounter{equation}{0} \setcounter{theorem}{0}

\subsection*{Some recent results on nonlocal supersolutions and superharmonic functions}

We recall some results from \cite{DKKP15}.
First of them is a natural comparison principle between $(s,p)$-superharmonic and $(s,p)$-subharmonic functions.

\begin{theorem}[{\bf Comparison principle of $(s,p)$-harmonic functions}]{\rm(\cite[Theorem 16]{DKKP15}).} \label{comp principle}
Let $u$ be $(s,p)$-superharmonic in $\Omega$ and let $v$ be $(s,p)$-subharmonic in $\Omega$.
If $u \geq v$ almost everywhere in $\R^n \setminus \Omega$ and
\[
\liminf_{\Omega \ni y \to x} u(y) \geq \limsup_{\Omega \ni y \to x} v(y) \qquad \text{for all } x \in \partial\Omega
\]
such that both sides are not simultaneously $+\infty$ or $-\infty$, then $u \geq v$ in $\Omega$.
\end{theorem}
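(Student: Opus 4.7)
The plan is to argue by contradiction. Assume
\[
m := \sup_\Omega (v - u) > 0;
\]
this is well defined because $u > -\infty$ in $\Omega$ and $v < +\infty$ in $\Omega$ by item (i) of the respective definitions. The function $\tilde w := v - u$ is upper semicontinuous in $\Omega$, being the sum of the u.s.c.\ function $v$ and the negative of the l.s.c.\ function $u$. Combining the boundary hypothesis with $u \geq v$ a.e.\ in $\R^n \setminus \Omega$ yields $\limsup_{\Omega \ni y \to x} \tilde w(y) \leq 0$ at every $x \in \partial \Omega$ (the excluded $\pm\infty$-cases are immediate, since then either $u \to +\infty$ or $v \to -\infty$ near $x$, forcing $\tilde w \to -\infty$). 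Thus every superlevel set $E_c := \{x \in \Omega : \tilde w(x) \geq c\}$ with $c \in (0,m)$ is closed in $\Omega$ and bounded away from $\partial \Omega$, hence compact in $\Omega$; by u.s.c.\ on a compact set, $m$ is finite and attained at some $x_0 \in \Omega$.

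Next I would localize. Pick a smooth open set $D$, e.g.\ a small tubular neighborhood of $E_{m/2}$, with $E_{m/2} \subset D \Subset \Omega$, and set $\hat v := v - m/2$, which is still $(s,p)$-subharmonic because the class is invariant under additive constants: $\L$ is unchanged under $u \mapsto u + c$, semicontinuity is preserved, and $(v-m/2)_+ \leq v_+$ preserves the tail bound. On $\Omega \setminus D$ one has $\tilde w < m/2$ and hence $\hat v < u$, while $\hat v \leq u - m/2$ on $\R^n \setminus \Omega$. Since $u - \hat v$ is l.s.c.\ and strictly positive on the compact set $\partial D$, there exists $\alpha > 0$ with $u - \hat v \geq \alpha$ on $\partial D$. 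Nevertheless $\hat v(x_0) = u(x_0) + m/2 > u(x_0)$.

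The crux is to insert a continuous weak solution between $u$ and $\hat v$ inside $D$. Construct $g \in L^{p-1}_{sp}(\R^n)$ with $\hat v \leq g \leq u$ a.e.\ on $\R^n \setminus D$ and with $\hat v \leq g \leq u - \alpha/2$ continuously near $\partial D$, using a Kat\v{e}tov--Tong insertion between the u.s.c.\ function $\hat v$ and the l.s.c.\ function $u$ on $\Omega \setminus D$, extended by $\hat v$ on $\R^n \setminus \Omega$. Solve the Dirichlet problem $\L h = 0$ in $D$ with exterior datum $g$: existence and uniqueness of the weak solution follow from the direct method in $W^{s,p}$, and $h \in C(\overline D)$ from the interior and boundary regularity theory for the fractional $p$-Laplacian referenced in the introduction (\cite{DKP15,DKP14,IMS15}). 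Apply comparison property (iii) of Definition \ref{def:superharmonic} twice. First, $u$ superharmonic against the continuous weak solution $h$ with $u \geq g = h$ a.e.\ outside $D$ and $u(x) \geq g(x) + \alpha/2 = h(x) + \alpha/2 > h(x)$ on $\partial D$ yields $u \geq h$ in $D$. Second, applying the analogous comparison to the superharmonic $-\hat v$ against the weak solution $-h$ (using $\hat v \leq g = h$ a.e.\ outside $D$ and $\hat v \leq g = h$ on $\partial D$) gives $\hat v \leq h$ in $D$. Chaining, $u \geq h \geq \hat v$ in $D$, contradicting $\hat v(x_0) > u(x_0)$.

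The main obstacle I anticipate is the production of the continuous weak solution $h$: it requires both solvability of the nonlocal Dirichlet problem for merely tail-controlled exterior data and continuity up to $\partial D$, relying on the regularity theory for the fractional $p$-Laplacian cited above. A secondary technicality is the construction of the interpolant $g$, which must combine the semicontinuity of $\hat v$ and $u$ in $\Omega$ with the quantitative buffer $\alpha$ on $\partial D$ and with tail control at infinity; this is achieved by gluing a Kat\v{e}tov--Tong insertion in $\Omega \setminus D$ to the natural exterior values on $\R^n \setminus \Omega$ while staying in $L^{p-1}_{sp}(\R^n)$.
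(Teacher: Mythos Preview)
This statement is not proved in the present paper: it appears among the results recalled from \cite{DKKP15} (see the heading ``Some recent results on nonlocal supersolutions and superharmonic functions''), so there is no proof here to compare against. Your outline --- shift $v$ down by $m/2$, localise to $D\Subset\Omega$ containing the superlevel set $E_{m/2}$, sandwich a continuous weak solution $h$ between $\hat v$ and $u$ via an interpolated exterior datum, then invoke clause~(iii) of Definition~\ref{def:superharmonic} from both sides --- is the natural approach.

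There is, however, a genuine gap where you dismiss the excluded boundary cases as ``immediate''. Suppose $\liminf_{y\to x} u(y) = \limsup_{y\to x} v(y) = +\infty$ at some $x\in\partial\Omega$. You correctly observe that $u(y)\to+\infty$, but $v$, although finite at every point of $\Omega$, is \emph{unbounded from above} near $x$, and nothing rules out $v(y_n)-u(y_n)\ge c>0$ along a sequence $y_n\to x$; your conclusion $\tilde w\to-\infty$ is therefore unjustified. The $-\infty$ case is symmetric. As a result you have not shown that $E_c$ stays away from $\partial\Omega$, so neither the finiteness and attainment of $m$ nor the existence of an open $D\Subset\Omega$ with $E_{m/2}\subset D$ is established. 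This is exactly the scenario the exclusion in the hypothesis is flagging, and it requires a genuine additional argument before the rest of your scheme can proceed. The other technicalities you raise (the Kat\v{e}tov--Tong insertion, the boundary continuity of $h$) are real but secondary to this.
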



Weak supersolutions and $(s,p)$-superharmonic functions are closely related, as demonstrated in~\cite{DKKP15}. 
Bounded $(s,p)$-superharmonic functions are weak supersolutions and, on the other hand,
weak supersolutions have lower semicontinuous representatives that are $(s,p)$-superharmonic.
\begin{theorem}{\rm(\cite[Theorem 1(iv)]{DKKP15}).} \label{thm_supersuper}
Let $u$ be an $(s,p)$-superharmonic function in $\Omega$. If $u$ is locally bounded in $\Omega$ or belongs to $W^{s,p}_{\rm loc}(\Omega)$, then $u$ is a weak supersolution of \eqref{eq:maineq}.
\end{theorem}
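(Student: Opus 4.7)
My plan is to reduce the statement to an obstacle-problem identification: on every subdomain $D \Subset \Omega$, the superharmonic function $u$ should coincide with the minimizer of the Gagliardo-type energy among functions which are $\geq u$ in $D$ and agree with $u$ outside $D$. Since obstacle-problem minimizers are automatically weak supersolutions, this identification concludes the proof. First, I would secure $u \in W^{s,p}_{\rm loc}(\Omega)$, which is the hypothesis in one case and, in the locally bounded case, follows from Caccioppoli-type estimates for the level-set truncations $\min\{u,M\}$, combined with the tail bound provided by $u_- \in L^{p-1}_{sp}(\ern)$.

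Fix $D \Subset \Omega$ with smooth boundary. Consider the convex admissible set
\[
\mathcal K_u = \{ w : w - u \in W^{s,p}_0(D),\ w \geq u \text{ a.e.\ in } D\}
\]
and minimize the energy $w \mapsto \int_{\ern}\int_{\ern} |w(x)-w(y)|^p K(x,y)\,dx\,dy$ over $\mathcal K_u$. Direct methods give a unique minimizer $v$, and the associated variational inequality produces (a) $v = u$ in $\ern \setminus D$, (b) $v \geq u$ a.e.\ in $D$, and (c) $v$ is a weak supersolution of $\L v = 0$ in $D$ and a weak solution in the non-coincidence region $\{v > u\}$.

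The heart of the argument is to establish the reverse inequality $u \geq v$ in $D$, which combined with (b) forces $u = v$ and concludes the proof. Since $u$ is only lower semicontinuous, I would approximate it from below by continuous functions $u_k \uparrow u$ and, for each $k$, let $v_k$ be the obstacle-problem minimizer with obstacle $u_k$ in $D$ and exterior datum $u_k$ in $\ern \setminus D$. By obstacle regularity, $v_k \in C(\overline D)$, the non-coincidence set $D_k := \{v_k > u_k\}$ is open and compactly contained in $D$, and $v_k$ is a weak solution there. On $\partial D_k$ one has $v_k = u_k \leq u$ by continuity, and in $\ern \setminus D_k$ one has $v_k = u_k \leq u$ by the obstacle contact; so Definition~\ref{def:superharmonic}(iii) applied with $D_k$ in place of $D$ gives $u \geq v_k$ in $D_k$, and hence throughout $\ern$. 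By monotonicity of the obstacle problem in the obstacle ($u_k \uparrow u$ forces $v_k \uparrow v$, with the limit identified as the obstacle-problem minimizer for $u$ via a minimality-and-lower-semicontinuity check on the energy), one concludes $u \geq v$ by passage to the limit.

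The main technical obstacle lies in coordinating the approximation: one must ensure joint pointwise, $W^{s,p}$, and tail convergence $v_k \to v$ to justify the identification of the limit as the original obstacle-problem solution, and one must verify that the approximants $u_k$ can be chosen so that the exterior data $u_k$ in $\ern \setminus D$ are admissible and do not spoil the tail-integrability needed to formulate the obstacle problem for each $v_k$. A further subtlety arises in the unbounded case $u\in W^{s,p}_{\rm loc}(\Omega)$ without a pointwise bound, where one additionally truncates $u$ at large levels and sends the truncation to infinity, using the global tail control $u_-\in L^{p-1}_{sp}(\ern)$ to propagate all estimates.
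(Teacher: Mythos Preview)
The paper does not prove this theorem; it is quoted verbatim from \cite{DKKP15}. Your obstacle-problem strategy is the standard one and is essentially what \cite{DKKP15} does, but there is a logical circularity in your opening step. You propose to secure $u\in W^{s,p}_{\rm loc}(\Omega)$ in the locally bounded case via ``Caccioppoli-type estimates for the level-set truncations $\min\{u,M\}$''; however, Caccioppoli inequalities are available for \emph{weak supersolutions}, not for $(s,p)$-superharmonic functions defined only through comparison, and the weak supersolution property is exactly what you are trying to establish. Since $u$ is locally bounded, $\min\{u,M\}=u$ on any fixed $D\Subset\Omega$ once $M$ is large, so the truncation buys nothing.

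This matters because your admissible class $\mathcal K_u=\{w:w-u\in W^{s,p}_0(D),\,w\geq u\}$ is not well-posed until $u\in W^{s,p}(D)$; the reference minimizer $v$ you want to identify with $u$ does not yet exist. The cure is to drop $v$ in the bounded case and work only with your approximating obstacle solutions $v_k$ (continuous obstacles and exterior data $u_k\uparrow u$): these \emph{are} weak supersolutions, so Caccioppoli applies to \emph{them}, producing uniform local $W^{s,p}$-bounds. Together with the sandwich $u_k\leq v_k\leq u$ (obstacle constraint below, your comparison argument above) and the local boundedness of $u$, this forces $v_k\to u$ in $W^{s,p}_{\rm loc}$, and you then pass to the limit directly in the variational inequality. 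The Sobolev regularity of $u$ is thus an \emph{output} of the argument, not an input. With this reordering your plan goes through; the remaining technical points you flag (tail-admissibility of the $u_k$, continuity of $v_k$ up to $\overline D$, stability of the obstacle problem under monotone convergence) are genuine but handled in \cite{KKP15} and \cite{DKKP15}.
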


\begin{theorem}{\rm(\cite[Theorem 12]{DKKP15}).} \label{thm_supersuper2}
Let $u$ be the lower semicontinuous representative of a weak supersolution of \eqref{eq:maineq} satisfying
\begin{equation} \label{eq:essliminf000}
u(x)=\essliminf_{y \to x}u(y) \qquad \text{for every } x\in\Omega.
\end{equation}
Then $u$ is an $(s,p)$-superharmonic function in $\Omega$.
\end{theorem}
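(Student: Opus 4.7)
The strategy is to verify the four conditions of Definition~\ref{def:superharmonic} for the lower semicontinuous representative $u$. Items (ii) and (iv) are immediate: (ii) by hypothesis, and (iv) since any weak supersolution satisfies $u \in L^{p-1}_{sp}(\R^n)$ by Definition~\ref{def:weaksuper}, so $u_-$ certainly does. For (i), local integrability of $u \in W^{s,p}_{\rm loc}(\Omega)$ gives $u<+\infty$ a.e., while local boundedness from below of weak supersolutions (a standard output of the nonlocal De Giorgi--Nash--Moser theory, cf.~\cite{DKP14}) combined with the essliminf identity \eqref{eq:essliminf000} yields $u(x)>-\infty$ for every $x\in\Omega$.

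The substance is the comparison property (iii). Fix an open $D \Subset \Omega$ and a continuous weak solution $v$ in $D$ with $u \geq v$ on $\partial D$ and a.e.\ in $\R^n \setminus D$; the plan is to prove $u \geq v$ in $D$ by a standard energy comparison. Since $u$ is lower semicontinuous on $\overline D$ and $v$ is continuous, the boundary inequality propagates inward: for each $\eps>0$ there is a neighborhood of $\partial D$ in $D$ on which $v - u < \eps$. Hence $U_\eps := \{x \in D : v(x)-u(x) > \eps\} \Subset D$, and setting
\[
\phi_\eps(x) := (v(x)-u(x)-\eps)_+ \text{ on } D, \qquad \phi_\eps \equiv 0 \text{ on } \R^n \setminus D,
\]
defines a nonnegative function in $W^{s,p}_0(D)$, admissible as a test function for both $u$ and $v$ by density of $C_0^\infty(D)$.

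I would then insert $\phi_\eps$ into the weak supersolution inequality for $u$ and the weak solution equation for $v$, subtract, and, writing $J(t):=|t|^{p-2}t$ and $w:=v-u$, obtain
\[
\int_{\R^n}\!\!\int_{\R^n}\bigl[J(u(x)-u(y))-J(v(x)-v(y))\bigr]\bigl(\phi_\eps(x)-\phi_\eps(y)\bigr)K(x,y)\,dx\,dy \geq 0.
\]
A pointwise case analysis based on whether each of $x,y$ lies in $U_\eps$ shows the integrand is \emph{nonpositive}: zero on $(\R^n \setminus U_\eps)^2$; nonpositive on $U_\eps \times U_\eps$ by the classical monotonicity $(J(a)-J(b))(a-b)\geq 0$ (here $\phi_\eps(x)-\phi_\eps(y)=w(x)-w(y)$ has sign opposite to $(u(x)-u(y))-(v(x)-v(y))$); and \emph{strictly} negative on the cross regions $U_\eps\times(\R^n\setminus U_\eps)$ and its transpose, where $w(x)-w(y)>0$ and $\phi_\eps(x)-\phi_\eps(y)=w(x)-\eps>0$ force a strictly negative product through the strict monotonicity of $J$. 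Combining $\geq 0$ with $\leq 0$ forces the integrand to vanish a.e.; the strict sign on the cross region then forces $|U_\eps|\cdot|\R^n\setminus U_\eps|=0$, and since $U_\eps$ is bounded, $|U_\eps|=0$. Sending $\eps\to 0^+$ yields $v\leq u$ a.e.\ in $D$, and upper semicontinuity of $v-u$ on $D$ upgrades this to $u\geq v$ pointwise in $D$.

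The main obstacle I anticipate is precisely this sign bookkeeping on the cross regions. In the local $p$-Laplace analogue only the diagonal contribution $U_\eps\times U_\eps$ survives and the conclusion follows from pure monotonicity plus Poincar\'e; the nonlocal feature is that the tails contribute a genuinely nontrivial, strictly negative cross term, and this is what actually forces $|U_\eps|=0$. Making the dichotomy rigorous for all $p\in(1,\infty)$ relies on the standard algebraic monotonicity inequality for $J$ and becomes routine once the decomposition above is in place.
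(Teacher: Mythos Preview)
This statement is not proved in the present paper at all: it is quoted as \cite[Theorem~12]{DKKP15} in the preliminaries section, with no argument supplied. There is thus nothing in the paper to compare your proposal against.

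That said, your energy-comparison argument is the natural one and is essentially correct. Testing both the supersolution inequality for $u$ and the equation for $v$ with $\phi_\eps=(v-u-\eps)_+$, the monotonicity of $t\mapsto|t|^{p-2}t$ makes the integrand nonpositive on $U_\eps\times U_\eps$ and on the cross set $U_\eps\times(\R^n\setminus U_\eps)$ (using $w(y)\le\eps$ in $D\setminus U_\eps$ and $w(y)\le 0$ a.e.\ in $\R^n\setminus D$), while the weak formulation forces the total integral to be nonnegative; the strict sign on the cross set then yields $|U_\eps|=0$. The only point that deserves a line of justification is the admissibility of $\phi_\eps$: you need $U_\eps\Subset D$ (which follows from upper semicontinuity of $v-u$ on $\overline D$ and the boundary inequality), so that $\phi_\eps$ is a compactly supported $W^{s,p}$-function in $D\Subset\Omega$, and then the usual density of $C_0^\infty$ in compactly supported $W^{s,p}$-functions lets you pass from smooth to $W^{s,p}$ test functions in both weak formulations.
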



\section{Auxiliary tools}\label{Sec1}
In this section, we gather some technical results needed in the sequel. In Subsection~\ref{ss.AI} we collect some elementary algebraic facts. In Subsection~\ref{ss.PV} we prove that the principal values are well-defined for functions that are smooth enough, and in Subsection~\ref{ss.CP}, in turn, we show that the operator $\L$ is locally uniformly continuous and stable with respect to smooth perturbations (cf. Lemma \ref{lemma:Lphi cont} and Lemma \ref{lemma:Lphi cont in eps}). 

\subsection{Algebraic inequalities} \label{ss.AI}

We start with a trivial observation.

\begin{lemma} \label{lemma:affines in kernel}
Let $\ell$ be an affine function and let $r \in(0,\infty)$. Then
\[
\int_{B_r(x) \setminus B_\eps(x)} |\ell(x) - \ell(y)|^{p-2}(\ell(x) - \ell(y)) K(x,y) \, dy = 0
\]
for all $\eps \in (0,r)$. 
\end{lemma}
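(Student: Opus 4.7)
The plan is to exploit the antisymmetry of the integrand under the reflection $y \mapsto 2x - y$. Writing the affine function as $\ell(z) = a \cdot z + b$ for some $a \in \R^n$, $b \in \R$, we have $\ell(x) - \ell(y) = a \cdot (x-y)$, so the integrand can be expressed as
\[
F(y) := |a \cdot (x-y)|^{p-2}(a \cdot (x-y))\,K(x,y).
\]
The first factor, viewed as a function of $y$, is odd with respect to reflection about $x$: substituting $y' = 2x - y$ gives $a \cdot (x - y') = -a \cdot (x-y)$, and hence $|a\cdot(x-y')|^{p-2}(a\cdot(x-y')) = -|a\cdot(x-y)|^{p-2}(a\cdot(x-y))$.

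The remaining task is to check that the kernel $K(x, \cdot)$ is \emph{even} with respect to reflection about $x$. Using translation invariance (property (ii) in the definition of $\K$) and symmetry (property (i)),
\[
K(x, 2x - y) = K(0, x - y) = K(x - y, 0) = K(0, y - x) = K(x, y),
\]
where the middle equality is symmetry and the last is translation invariance again. Thus $F(2x - y) = -F(y)$.

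Since the annulus $B_r(x) \setminus B_\eps(x)$ is itself invariant under the reflection $y \mapsto 2x - y$, the change of variables $y' = 2x - y$ (whose Jacobian has absolute value $1$) yields
\[
\int_{B_r(x)\setminus B_\eps(x)} F(y)\,dy = \int_{B_r(x)\setminus B_\eps(x)} F(2x - y')\,dy' = -\int_{B_r(x)\setminus B_\eps(x)} F(y')\,dy',
\]
so the integral equals its own negative and therefore vanishes. No step here is really an obstacle; the only mild care needed is to confirm that symmetry combined with translation invariance forces $K(0,\cdot)$ to be even, which is immediate from the two-line computation above.
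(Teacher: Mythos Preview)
Your proof is correct and captures exactly the same idea the paper has in mind (the paper omits the proof as obvious, but the commented-out argument reduces to $x=0$, passes to spherical coordinates, and uses that $\omega \mapsto K(r\omega)$ is even while $\omega \mapsto |a\cdot\omega|^{p-2}a\cdot\omega$ is odd). Your direct change of variables $y \mapsto 2x-y$ is a slightly cleaner packaging of the same odd/even symmetry, and your verification that translation invariance plus symmetry force $K(x,\cdot)$ to be even about $x$ is the only point requiring any care---you handle it correctly.
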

We omit the obvious proof.
The following elementary results turn out to be useful in characterizing the principal values. 

\begin{lemma} \label{lemma:stupid integral} 
Let $p>1$ and $a,b\in \er $. 
Then
\[
 \int_0^1 |a +  b t|^{p-2} \, dt \leq   c_p (|a| + |b|)^{p-2}, \qquad c_p := \begin{cases}
 1, &  p\geq 2,  \\
 \dfrac{4^{2-p}}{p-1}, & 1<p<2.
\end{cases}
\]
\end{lemma}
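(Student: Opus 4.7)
The plan is to handle the two ranges of $p$ separately. The case $p \geq 2$ is immediate: since $p-2 \geq 0$, the map $x \mapsto x^{p-2}$ is nondecreasing on $[0,\infty)$, and $|a+bt| \leq |a|+|b|t \leq |a|+|b|$ for $t \in [0,1]$ gives $|a+bt|^{p-2} \leq (|a|+|b|)^{p-2}$ pointwise; integrating over $[0,1]$ yields the desired bound with $c_p = 1$.

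The substantive case is $1 < p < 2$, where $|a+bt|^{p-2}$ is singular at any zero of $t \mapsto a+bt$ in $[0,1]$ and its integrability rests on $p-1 > 0$. I would first reduce to $a \geq 0$ via the symmetry $(a,b) \mapsto (-a,-b)$, under which both sides of the claim are invariant, and dispose of the trivial case $b = 0$. The substitution $u = a + bt$ then gives
\[
\int_0^1 |a+bt|^{p-2}\,dt \,=\, \frac{1}{|b|}\int_{I}|u|^{p-2}\,du,
\]
where $I$ is the interval of length $|b|$ with endpoints $a$ and $a+b$. Using the primitive $|u|^{p-1}\sgn(u)/(p-1)$, the inner integral evaluates in two geometric regimes: (i) the non-crossing regime $0\notin I$ (i.e.\ $a$ and $a+b$ have the same sign), in which the value equals $||a|^{p-1}-|a+b|^{p-1}|/(p-1)$ and the subadditivity of $x \mapsto x^{p-1}$ on $[0,\infty)$ (valid because $0<p-1<1$, as a concave function vanishing at $0$) bounds it by $|b|^{p-1}/(p-1)$; (ii) the crossing regime $0 \in I$, which forces $a > 0 > a+b$ and hence $a < |b|$, in which the value equals $(a^{p-1}+|a+b|^{p-1})/(p-1)$ and, since $a + |a+b| = |b|$, Jensen's inequality for the concave map $x \mapsto x^{p-1}$ yields the bound $2^{2-p}|b|^{p-1}/(p-1)$.

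It remains to pass from $|b|^{p-2}$ to $(|a|+|b|)^{p-2}$, which I would do by a further case split tuned to the threshold $|a| = 3|b|$. In the crossing regime one already has $|a| \leq |b|$, so $|a|+|b| \leq 2|b|$ and, because $p-2<0$, $|b|^{p-2} \leq 2^{2-p}(|a|+|b|)^{p-2}$; combined with (ii) this produces precisely the constant $4^{2-p}/(p-1)$. In the non-crossing regime with $|a| \leq 3|b|$ one has $|a|+|b| \leq 4|b|$ so that $|b|^{p-2} \leq 4^{2-p}(|a|+|b|)^{p-2}$, and (i) closes the case. In the non-crossing regime with $|a| > 3|b|$ one abandons the substitution and uses the crude pointwise bound $|a+bt|^{p-2} \leq (|a|-|b|)^{p-2}$, together with $|a|+|b|<2(|a|-|b|)$, to obtain $\int \leq 2^{2-p}(|a|+|b|)^{p-2}$; the elementary inequality $(p-1)\cdot 2^{2-p} \leq 4^{2-p}$ on $(1,2]$, which holds with equality at $p=2$ and has the right monotonicity below (the left side increases with $p$ while the right side decreases), absorbs this last bound into $4^{2-p}/(p-1)$.

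The main obstacle is therefore not analytical but combinatorial: the sign of $a+bt$ on $[0,1]$ and the size ratio $|a|/|b|$ generate three subcases, and one has to verify that $4^{2-p}/(p-1)$ is indeed the worst constant across all of them. Once the substitution $u=a+bt$ and the choice of threshold $|a|=3|b|$ are in place, the remaining work is routine bookkeeping.
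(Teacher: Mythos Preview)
Your argument is correct and follows essentially the same route as the paper's proof: the $p\geq 2$ case is handled by the trivial pointwise bound, and for $1<p<2$ both proofs split according to the size of $|a|/|b|$, using the crude pointwise estimate $|a+bt|^{p-2}\leq(|a|-|b|)^{p-2}$ when $|a|$ dominates and an explicit antiderivative computation otherwise. The only cosmetic differences are that the paper uses the threshold $|a|=2|b|$ rather than your $|a|=3|b|$, and organizes the small-ratio case by the sign of $\tilde a=a/b$ (three subcases) where you organize by whether $0$ lies in the interval of integration; your crossing/non-crossing dichotomy coincides with the paper's sign cases, and your subadditivity/Jensen steps are exactly what fills in the paper's terse ``It follows that''.
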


\begin{proof}
Assume first $a \neq 0$. If $p \geq 2$, we easily obtain
\[
\int_0^1 |a +  b t|^{p-2} \, dt  \leq (|a| + |b|)^{p-2}.
\]
Let then $1<p<2$. There are two cases to consider.
If $|a| \geq 2 |b|$, then
\[
\int_0^1 |a + b t|^{p-2} \, dt \leq (|a|-|b|)^{p-2} \leq 2^{2-p} |a|^{p-2} \leq 4^{2-p}(|a|+|b|)^{p-2}.
\]
If, in turn, $|a| < 2|b|$, denote $\tilde a = a/b \in (-2,2)$ and compute
\begin{align*} \label{eq:} 
 \int_0^1 |a +  b t|^{p-2} \, dt  &= |b|^{p-2} \int_{\tilde a}^{\tilde a +1} |\tau|^{p-2} \, d\tau \\
 &= \frac{|b|^{p-2}}{p-1}
\begin{cases}
(|\tilde a|+1)^{p-1} - |\tilde a|^{p-1}  , & \tilde a \geq 0 ,    \\
 (\tilde a+1)^{p-1} +  |\tilde a|^{p-1}, &  -1 <\tilde a < 0 ,  \\
  |\tilde a|^{p-1} - (|\tilde a|-1)^{p-1}  , & \tilde a \leq - 1.
\end{cases} 
\end{align*}
It follows that
\[
\int_0^1 |a +  b t|^{p-2} \, dt \leq \frac{4^{2-p}}{p-1} (|a|+|b|)^{p-2}.
\]
Finally, if $a=0$, then clearly
\[
\int_0^1 |a +  b t|^{p-2} \, dt = \frac{|b|^{p-2}}{p-1} \leq c_p (|a| + |b|)^{p-2}.
\]
This finishes the proof.
\end{proof}

\begin{lemma} \label{lemma:stupid integral 2} 
Let $p>1$ and $a,b \in \er$. 
Then
\[
 \int_0^1 |a +  b t|^{p-2} \, dt \geq   c_p (|a| + |b|)^{p-2}, \qquad c_p := \begin{cases}
 1, &  1<p< 2,  \\
 \dfrac{4^{2-p}}{p-1}, & p \geq 2.
\end{cases}
\]
\end{lemma}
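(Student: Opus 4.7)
The proof will mirror the case analysis of Lemma \ref{lemma:stupid integral}, but running the estimates in the opposite direction.

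For $1 < p < 2$ the inequality is essentially trivial: since $|a+bt| \leq |a|+|b|$ for all $t \in [0,1]$ and the exponent $p-2$ is negative, one has $|a+bt|^{p-2} \geq (|a|+|b|)^{p-2}$ pointwise, and integrating over $[0,1]$ gives the claim with $c_p=1$. (The degenerate cases $a=0$ or $b=0$ are handled directly.)

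The substantive case is $p \geq 2$, where $|a+bt|^{p-2}$ may vanish on $[0,1]$, so a pointwise comparison is too crude. I would split exactly as in the proof of Lemma \ref{lemma:stupid integral}. When $|a| \geq 2|b|$, the linear function stays away from zero: $|a+bt| \geq |a|-|b| \geq |a|/2 \geq (|a|+|b|)/4$, so
\[
\int_0^1 |a+bt|^{p-2}\,dt \;\geq\; 4^{2-p}(|a|+|b|)^{p-2},
\]
which is at least $c_p(|a|+|b|)^{p-2}$ since $\frac{1}{p-1} \leq 1$ for $p \geq 2$. When $|a|<2|b|$, I would substitute $\tau = \tilde a + t$ with $\tilde a = a/b \in (-2,2)$ to write
\[
\int_0^1 |a+bt|^{p-2}\,dt \;=\; \frac{|b|^{p-2}}{p-1}\,F(\tilde a),
\]
where $F(\tilde a)$ equals $(|\tilde a|+1)^{p-1}-|\tilde a|^{p-1}$ for $\tilde a \geq 0$, $(\tilde a+1)^{p-1}+|\tilde a|^{p-1}$ for $-1<\tilde a<0$, and $|\tilde a|^{p-1}-(|\tilde a|-1)^{p-1}$ for $\tilde a \leq -1$. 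Using $|a|+|b|=|b|(|\tilde a|+1)$, the desired bound reduces to $F(\tilde a) \geq 4^{2-p}(|\tilde a|+1)^{p-2}$ on $(-2,2)$.

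Since $|\tilde a|+1<3$, one has $(|\tilde a|+1)^{p-2} \leq 3^{p-2}$, so it suffices to show $F(\tilde a) \geq (3/4)^{p-2} 3^0$-type lower bound. In each of the three branches, $F$ is bounded below away from $0$ by a convexity argument: for $\tilde a \geq 0$ and for $\tilde a \leq -1$, the mean value theorem applied to $x \mapsto x^{p-1}$ on an interval of length one contained in $[0,3]$ gives $F(\tilde a) \geq (p-1)\cdot (\min\{|\tilde a|,|\tilde a|+1\})^{p-2}$, which is easily compared against $4^{2-p}(|\tilde a|+1)^{p-2}$; for $-1<\tilde a<0$ one has $F(\tilde a) \geq 2 \cdot (1/2)^{p-1} = 2^{2-p}$, again sufficient. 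Combining the cases yields the claim.

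The only mild obstacle is the bookkeeping in the interior branch $-1<\tilde a<0$, where $F$ is minimized near $\tilde a=-1/2$; but the resulting lower bound $2^{2-p}$ comfortably exceeds $4^{2-p}(|\tilde a|+1)^{p-2}$ on that range, so no sharp computation is required.
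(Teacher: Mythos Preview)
Your overall approach coincides with the paper's: the case $1<p<2$ is handled by the pointwise bound $|a+bt|\leq |a|+|b|$, and for $p\geq 2$ you split into $|a|\geq 2|b|$ and $|a|<2|b|$, using in the latter case the same substitution $\tilde a=a/b$ and the same explicit primitive. The paper is equally terse at the concluding step (``switching back to $a$, we obtain\dots''), so the level of detail is comparable.

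There is, however, a concrete error in your treatment of the branches $\tilde a\geq 0$ and $\tilde a\leq -1$. The mean value theorem gives $F(\tilde a)\geq (p-1)|\tilde a|^{p-2}$ on the first branch and $(p-1)(|\tilde a|-1)^{p-2}$ on the second (your expression $\min\{|\tilde a|,|\tilde a|+1\}$ is also misstated for the second branch), but for $p>2$ this lower bound tends to $0$ as $\tilde a\to 0^+$, respectively as $|\tilde a|\to 1^+$, and therefore cannot dominate $4^{2-p}(|\tilde a|+1)^{p-2}$, which is bounded below by $4^{2-p}>0$. The comparison you call ``easy'' is in fact false on these subranges. The repair is immediate: on both outer branches one has $F(\tilde a)=(x+1)^{p-1}-x^{p-1}$ for some $x\geq 0$, and since $t\mapsto (t+1)^{p-1}-t^{p-1}$ is nondecreasing for $p\geq 2$, one gets $F(\tilde a)\geq 1$. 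Combined with $(|\tilde a|+1)^{p-2}\leq 3^{p-2}$ on $(-2,2)$ and $1\geq (3/4)^{p-2}=4^{2-p}\cdot 3^{p-2}$, the desired bound follows. Your argument for the middle branch $-1<\tilde a<0$ is correct, though ``comfortably exceeds'' should be ``is at least'': the inequality $2^{2-p}\geq 4^{2-p}(|\tilde a|+1)^{p-2}$ becomes an equality as $|\tilde a|\to 1$.
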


\begin{proof}
Assume first $a \neq 0$. If $1<p<2$, we easily obtain
\[
\int_0^1 |a +  b t|^{p-2} \, dt  \geq (|a| + |b|)^{p-2}.
\]
Let then $p \geq 2$. There are two cases to consider. 
If $|a| \geq 2 |b|$, then
\[
\int_0^1 |a + b t|^{p-2} \, dt \geq (|a|-|b|)^{p-2} \geq 2^{2-p} |a|^{p-2} \geq 4^{2-p}(|a|+|b|)^{p-2}.
\]
If, in turn, $|a| < 2|b|$, let $\tilde a = a/b \in (-2,2)$ and compute
\begin{align*}
 \int_0^1 |a +  b t|^{p-2} \, dt  &= |b|^{p-2}\int_{\tilde a}^{\tilde a + 1} |\tau|^{p-2} \, d\tau \\
&= \frac{|b|^{p-2}}{p-1}
\begin{cases}
(|\tilde a|+1)^{p-1} - |\tilde a|^{p-1}  , & \tilde a \geq 0 ,    \\
 (\tilde a+1)^{p-1} +  |\tilde a|^{p-1}, &  -1 <\tilde a < 0 ,  \\
  |\tilde a|^{p-1} - (|\tilde a|-1)^{p-1}  , & \tilde a \leq - 1.
\end{cases} 
\end{align*}
Switching back to $a$, we obtain
\[
\int_0^1 |a +  b t|^{p-2} \, dt \geq \frac{4^{2-p}}{p-1} (|a|+|b|)^{p-2}.
\]
Finally, if $a=0$, then
\[
\int_0^1 |a +  b t|^{p-2} \, dt = \frac{|b|^{p-2}}{p-1} \geq c_p (|a| + |b|)^{p-2}.
\]
This finishes the proof.
\end{proof}

\begin{lemma} \label{lemma:simple inequality}
Let $p>1$ and $a,b \in \er$. Then
\[
\left||a|^{p-2}a-|b|^{p-2}b\right| \leq c\,(|b|+|a-b|)^{p-2}|a-b|,
\]
where $c$ depends only on $p$.
\end{lemma}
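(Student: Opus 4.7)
The plan is to reduce the claim to Lemma \ref{lemma:stupid integral} by applying the fundamental theorem of calculus to the function $f(t) := |t|^{p-2}t$. Since $f$ is $C^{1}$ on $\R$ with $f'(t) = (p-1)|t|^{p-2}$ (interpreted as $+\infty$ at $t=0$ when $p<2$, but locally integrable), I would write
\[
|a|^{p-2}a - |b|^{p-2}b = \int_0^1 \frac{d}{dt} f\bigl(b + t(a-b)\bigr)\,dt = (p-1)(a-b) \int_0^1 \bigl|b + t(a-b)\bigr|^{p-2}\,dt.
\]
Taking absolute values gives
\[
\bigl||a|^{p-2}a - |b|^{p-2}b\bigr| = (p-1)|a-b| \int_0^1 \bigl|b + t(a-b)\bigr|^{p-2}\,dt.
\]

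Next, I would invoke Lemma \ref{lemma:stupid integral} with the roles $a \leftrightarrow b$ and $b \leftrightarrow (a-b)$, which yields
\[
\int_0^1 \bigl|b + t(a-b)\bigr|^{p-2}\,dt \leq c_p \bigl(|b| + |a-b|\bigr)^{p-2},
\]
with $c_p$ as in that lemma. Combining the two displays produces the desired inequality with constant $c = (p-1)c_p$, which depends only on $p$.

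The only subtle point is in the range $1 < p < 2$, where the integrand may blow up on the segment from $b$ to $a$; however, this is precisely the situation already handled by Lemma \ref{lemma:stupid integral}, so no additional argument is needed. The identity from the fundamental theorem of calculus remains valid in the almost-everywhere sense because $f$ is absolutely continuous on $\R$ for every $p>1$. Thus the proof reduces, with essentially no extra work, to an application of the previous lemma.
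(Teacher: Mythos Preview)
Your proof is correct and is essentially identical to the paper's own argument: both write $|a|^{p-2}a-|b|^{p-2}b$ via the fundamental theorem of calculus as $(p-1)(a-b)\int_0^1 |b+t(a-b)|^{p-2}\,dt$ and then apply Lemma~\ref{lemma:stupid integral}. The paper does not spell out the absolute-continuity justification you mention, but otherwise the two proofs coincide line for line.
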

\begin{proof}
Since
\[
\frac{d}{dt}\left(\left|ta+(1-t)b\right|^{p-2}\left(ta+(1-t)b\right)\right)=(p-1)(a-b)\left|ta+(1-t)b\right|^{p-2},
\]
we can estimate using Lemma \ref{lemma:stupid integral}
\begin{align*}
\left||a|^{p-2}a-|b|^{p-2}b\right| &= \left|(p-1)(a-b)\int_0^{1}|ta+(1-t)b|^{p-2}\,dt\right| \\
&\leq c_p(p-1)|a-b|(|b|+|a-b|)^{p-2},
\end{align*}
and the claim follows.
\end{proof}

The following estimate can be easily obtained using spherical coordinates.
\begin{lemma} \label{lemma:spherical estimate}
Let $e$ be a unit vector in $\ern$, let $p>1$, and let $a\geq 0$. Then
\[
\int_{S^{n}}(|e \cdot \omega| + a)^{p-2}\,d\omega \leq c\,(1+a)^{p-2},
\]
where $S^{n}$ is the unit sphere around the origin and $c$ depends only on $n$ and $p$.
\end{lemma}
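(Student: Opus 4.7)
The plan is to reduce the spherical integral to a one-dimensional integral by choosing coordinates on $S^{n-1}$ adapted to the direction $e$. Writing $\omega = t\, e + \sqrt{1-t^2}\,\omega'$ with $t\in[-1,1]$ and $\omega'\in e^\perp\cap S^{n-1}$, the surface measure decomposes and $|e\cdot\omega|=|t|$, so that
\[
\int_{S^{n-1}}(|e\cdot\omega|+a)^{p-2}\,d\omega
= c_n\int_{-1}^{1}(|t|+a)^{p-2}(1-t^2)^{\frac{n-3}{2}}\,dt,
\]
for a dimensional constant $c_n$ (the case $n=1$ reduces to a two-term sum and is trivial). The task becomes to dominate the right-hand side by $c(1+a)^{p-2}$.

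For $p\geq 2$, the factor $(|t|+a)^{p-2}\leq(1+a)^{p-2}$ is uniform in $t$, and the weight $(1-t^2)^{(n-3)/2}$ is integrable on $[-1,1]$ (a Beta function for $n\geq 2$), so the bound is immediate. For $1<p<2$ the integrand blows up near $t=0$ and care is needed. I would split the integral at $|t|=1/2$. On $\{|t|>1/2\}$ the factor $(|t|+a)^{p-2}$ is bounded by $(1/2+a)^{p-2}\leq 2^{2-p}(1+a)^{p-2}$ and the weight is integrable (including the $(1-t^2)^{-1/2}$ case $n=2$), giving the desired bound. On $\{|t|\leq 1/2\}$ the weight is bounded by a dimensional constant, and the matter reduces to estimating
\[
\int_0^{1}(t+a)^{p-2}\,dt = \frac{(1+a)^{p-1}-a^{p-1}}{p-1}.
\]

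The only nontrivial piece is then the elementary inequality $(1+a)^{p-1}-a^{p-1}\leq c(p-1)(1+a)^{p-2}$ for $1<p<2$, which I would check by separating $a\leq 1$ (both sides are comparable to constants) from $a>1$, where concavity of $s\mapsto s^{p-1}$ gives $(1+x)^{p-1}\leq 1+(p-1)x$ with $x=1/a$, hence $(1+a)^{p-1}-a^{p-1}\leq(p-1)a^{p-2}\leq 2^{2-p}(p-1)(1+a)^{p-2}$. I do not foresee any real obstacle; the only mildly delicate point is handling the endpoint singularity of the weight when $n=2$, which the split at $|t|=1/2$ dispatches uniformly in $n$.
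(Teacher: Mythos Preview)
Your proposal is correct and follows precisely the approach indicated by the paper, which simply remarks that the estimate ``can be easily obtained using spherical coordinates'' and omits the details. Your argument fills in those details faithfully: the reduction to the one-dimensional integral in the height variable, the trivial case $p\geq 2$, and the split at $|t|=1/2$ for $1<p<2$ together with the elementary bound on $(1+a)^{p-1}-a^{p-1}$ are all sound, and the endpoint $n=2$ is handled by your split.
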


\subsection{Principal values} \label{ss.PV}
The aim is now to prove that the principal value defining the operator $\L$ is well-defined when the involved functions are smooth enough. In order to accomplish this, we need uniform estimates on small balls. These are the following two lemmas. Throughout the section, we assume that $K \in \K$. 

\begin{lemma} \label{lemma:smallness of intBeps1}
Let $B_\eps(x) \subset D \Subset \Omega$ and let $u \in C^{2}(D)$. If we have $p>\frac{2}{2-s}$ or $D \Subset \{d_u>0\}$, then
\begin{align} \label{eq:intBeps bound1}
\left|{\rm P.V.}\int_{B_\eps(x)}|u(x)-u(y)|^{p-2}(u(x)-u(y))K(x,y)\,dy\right| \leq c_\eps,
\end{align}
where $c_\eps$ is independent of $x$  and $c_\eps \to 0$ as $\eps \to 0$.
\end{lemma}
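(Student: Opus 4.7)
The plan is to exploit the symmetry of $K$ to cancel the linear part of $u$ near $x$, which removes the need for the principal value and reduces the estimate to one on a Taylor remainder. Let $\ell(y) := u(x) + \nabla u(x) \cdot (y-x)$. By Lemma \ref{lemma:affines in kernel}, the P.V.\ integral of $|\ell(x)-\ell(y)|^{p-2}(\ell(x)-\ell(y))K(x,y)$ over $B_\eps(x) \setminus B_\delta(x)$ vanishes for every $\delta \in (0,\eps)$, so the quantity in \eqref{eq:intBeps bound1} equals
\[
\int_{B_\eps(x)} \Bigl[|u(x)-u(y)|^{p-2}(u(x)-u(y)) - |\ell(x)-\ell(y)|^{p-2}(\ell(x)-\ell(y))\Bigr] K(x,y)\, dy.
\]
Setting $a := -\nabla u(x)\cdot (y-x)$ and $b := u(x) - u(y) - a$, the identity
\[
|a+b|^{p-2}(a+b) - |a|^{p-2}a = (p-1)b \int_0^1 |a+tb|^{p-2}\,dt,
\]
combined with Lemma \ref{lemma:stupid integral} and the Taylor bound $|b| \leq C_u|y-x|^2$, yields the pointwise estimate
\[
\bigl||u(x)-u(y)|^{p-2}(u(x)-u(y)) - |\ell(x)-\ell(y)|^{p-2}(\ell(x)-\ell(y))\bigr| \leq c\,|b|(|a|+|b|)^{p-2},
\]
where $C_u$ and $c$ depend only on $\|u\|_{C^2(\overline D)}$ and $p$.

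Now I would switch to polar coordinates $y = x+r\omega$ with $r \in (0,\eps)$, use $K(x,y) \leq \Lambda r^{-n-sp}$, and split into three regimes dictated by the hypotheses. In the non-singular range $p \geq 2$, one has $|a|+|b|\leq Cr$ and hence $(|a|+|b|)^{p-2}\leq Cr^{p-2}$, so the radial integral is bounded by a constant multiple of $\eps^{p(1-s)}$. In the singular range $1<p<2$ with $p > \tfrac{2}{2-s}$, the opposite bound $(|a|+|b|)^{p-2} \leq |b|^{p-2}$ (valid since $p-2<0$ and $|a|+|b|\geq|b|$) yields $|b|(|a|+|b|)^{p-2} \leq C r^{2(p-1)}$, and integration produces a bound of order $\eps^{2(p-1)-sp}$; the exponent is positive precisely by the hypothesis. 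In the remaining regime $1<p<2$ with $D \Subset \{d_u > 0\}$, the gradient satisfies $|\nabla u|\geq c_1 > 0$ on $\overline D$, and Lemma \ref{lemma:spherical estimate} applied to $e = \nabla u(x)/|\nabla u(x)|$ controls the angular integral $\int_{S^{n-1}}(|\nabla u(x)\cdot\omega| + Cr)^{p-2}\,d\sigma$ by $c(|\nabla u(x)|+Cr)^{p-2} \leq c\,c_1^{p-2}$; the radial integral then again yields $\eps^{p(1-s)}$.

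The main obstacle is the regime $1<p<2$, where $(|a|+|b|)^{p-2}$ can simultaneously diverge radially at $r=0$ and angularly on the set $\{\omega : \nabla u(x)\cdot\omega = 0\}$. The hypothesis is calibrated precisely to defeat one of these singularities: either $p > 2/(2-s)$ makes the worst-case decay $r^{2(p-1)}$ integrable against $r^{-n-sp}r^{n-1}$ without any help from $a$, or the uniform gradient lower bound converts the angular singularity, via Lemma \ref{lemma:spherical estimate}, into a bounded quantity. In all regimes the constants depend only on $n, s, p, \Lambda, \|u\|_{C^2(\overline D)}$ and, in the last regime, on $\min_{\overline D}|\nabla u|$, none of which see $x$ as long as $B_\eps(x) \subset D$; hence $c_\eps \to 0$ uniformly in $x$ as $\eps \to 0$. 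The same estimates show that the integrand of the rewritten integral is absolutely integrable, justifying the removal of the principal value a posteriori.
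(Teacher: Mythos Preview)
Your proposal is correct and follows essentially the same route as the paper: subtract the affine part via Lemma~\ref{lemma:affines in kernel}, invoke Lemma~\ref{lemma:stupid integral} (the paper packages this as Lemma~\ref{lemma:simple inequality}) to bound the difference by $c\,|b|(|a|+|b|)^{p-2}$, pass to polar coordinates, and split into the same three regimes, using Lemma~\ref{lemma:spherical estimate} only in the last one. Your handling of the regimes $p\geq 2$ and $\tfrac{2}{2-s}<p<2$ is in fact slightly more direct than the paper's (you bypass the spherical lemma there and need not isolate $\nabla u(x)=0$ separately); the only step you leave implicit is that in the third regime, passing from $|b|(|a|+|b|)^{p-2}$ to $Cr^2(|a|+Cr^2)^{p-2}$ requires the monotonicity of $b\mapsto b(a+b)^{p-2}$ for $a,b\geq 0$, since for $p<2$ the two factors cannot be bounded separately---the paper makes this explicit.
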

\begin{proof} If $|\nabla u(x)|=0$ and $p>\frac{2}{2-s}$, the result is quite obvious using the fact that  $u \in C^2$ and $\nabla u(x) = 0$ implies
$$
|u(x)-u(y)|\leq C|x-y|^2,
$$ for some constant $C$. For this reason we only treat the case $|\nabla u(x)|\neq 0$. Throughout the proof, the constant $c$ will denote a constant depending on $n$, $p$, $s$, $\Lambda$, $\|u\|_{C^2(D)}$, and $D$.

Let $\ell(y):=u(x)+\nabla u(x)\cdot(y-x)$ be the affine part of $u$ near $x$. Denoting by $g(t):=|t|^{p-2}t$, we have by Lemma \ref{lemma:affines in kernel} and
Lemma \ref{lemma:simple inequality} that
\begin{align*}
&\left|\int_{B_\eps(x)}|u(x)-u(y)|^{p-2}(u(x)-u(y))K(x,y)\,dy\right| \\
&\qquad \leq \int_{B_\eps(x)}\left|g(u(x)-u(y))-g(\ell(x)-\ell(y))\right|K(x,y)\,dy \\
&\qquad \leq c\int_{B_\eps(x)}\big(|\ell(x)-\ell(y)|+|u(y)-\ell(y)|\big)^{p-2}|u(y)-\ell(y)|K(x,y)\,dy \\
&\qquad = c\int_{B_\eps(x)}\big(|\nabla u(x) \cdot(y-x)|+|u(y)-\ell(y)|\big)^{p-2}|u(y)-\ell(y)|K(x,y)\,dy.
\end{align*}
After introducing $z=y-x$ and switching to spherical coordinates, we obtain, with $\tau:=\sup_{D}|D^{2}u|$, 
\begin{align*}
&\left|\int_{B_\eps(x)}|u(x)-u(y)|^{p-2}(u(x)-u(y))K(x,y)\,dy\right| \\
&\qquad \leq c\int_{B_\eps(0)}\Big(|\nabla u(x) \cdot z|+ \sup_{|\xi-x|<|z|}|D^{2}u(\xi)||z|^{2}\Big)^{p-2}\!\!\!\! \sup_{|\xi-x|<|z|}|D^{2}u(\xi)||z|^{2}K(z)\,dz \\
&\qquad \leq c\int_0^{\eps}\int_{S^{n}}\bigg(|\nabla u(x) \cdot \omega|r+\tau r^{2}\bigg)^{p-2}\tau r^{2-n-sp+n-1}\,d\omega\,dr \\
&\qquad = c\,\tau \int_0^{\eps}\int_{S^{n}}\left(\frac{|\nabla u(x) \cdot \omega|}{|\nabla u(x)|}+\frac{\tau r}{|\nabla u(x)|}\right)^{p-2} |\nabla u(x)|^{p-2} d\omega\, r^{p(1-s)}\frac{dr}{r},
\end{align*}
where we used the monotonicity of $(a+b)^{p-2}b$ with respect to $b$ when $a,b \geq 0$ and the upper bound for $K(z)$. Applying Lemma \ref{lemma:spherical estimate}, we obtain
\begin{align} \label{eq:int0eps bound}
&\left|\int_{B_\eps(x)}|u(x)-u(y)|^{p-2}(u(x)-u(y))K(x,y)\,dy\right| \nonumber \\
&\qquad\leq c\,\tau \int_0^{\eps}\left(1+\frac{\tau r}{|\nabla u(x)|}\right)^{p-2} |\nabla u(x)|^{p-2} r^{p(1-s)}\frac{dr}{r}.
\end{align}
If $p\geq 2$, we obtain from \eqref{eq:int0eps bound} that
\begin{align*}
&\left|\int_{B_\eps(x)}|u(x)-u(y)|^{p-2}(u(x)-u(y))K(x,y)\,dy\right| \\
&\qquad\leq c\,\tau \int_0^{\eps}\left(1+\frac{\tau^{p-2}r^{p-2}}{|\nabla u(x)|^{p-2}}\right) |\nabla u(x)|^{p-2}  r^{p(1-s)}\frac{dr}{r} \\[1ex]
&\qquad\leq c\,\tau\sup_{D}|\nabla u|^{p-2}\eps^{p(1-s)}+c\,\tau^{p-1}\eps^{p-2+p(1-s)}.
\end{align*}
If $\frac{2}{2-s}<p<2$, \eqref{eq:int0eps bound} can simply be estimated as
\begin{align*}
&\left|\int_{B_\eps(x)}|u(x)-u(y)|^{p-2}(u(x)-u(y))K(x,y)\,dy\right| \\
&\qquad\leq c\,\tau\int_0^{\eps}\left(\frac{\tau r}{|\nabla u(x)|}\right)^{p-2}|\nabla u(x)|^{p-2}r^{p(1-s)}\frac{dr}{r} \\[1ex]
&\qquad\leq c\,\tau^{p-1}\eps^{p-2+p(1-s)}.
\end{align*}
Finally, if $1<p \leq \frac{2}{2-s}$ and $D \Subset \{x \in \Omega : d_u(x)>0\}$, then $\inf_{D}|\nabla u|>0$ and \eqref{eq:int0eps bound} leads to
\begin{align*}
&\left|\int_{B_\eps(x)}|u(x)-u(y)|^{p-2}(u(x)-u(y))K(x,y)\,dy\right| \\
&\qquad\leq c\,\tau\int_0^{\eps}|\nabla u(x)|^{p-2}r^{p(1-s)}\frac{dr}{r} \\
&\qquad\leq c\,\tau\sup_{D}|\nabla u|^{p-2}\eps^{p(1-s)}.
\end{align*}
In all cases it is now straightforward to check the statement, finishing the proof.
\end{proof}

\begin{lemma} \label{lemma:smallness of intBeps2}
Let $1<p\leq \frac{2}{2-s}$, let $D \subset \Omega$, and let $u \in C^{2}_{\beta}(D)$ with $\beta > \frac{sp}{p-1}$.
Suppose further that $B_\eps(x) \subset D$ and $x$ is such that $d_u(x)<\eps<1$. Then
\begin{align} \label{eq:intBeps bound2}
\left|{\rm P.V.}\int_{B_\eps(x)}|u(x)-u(y)|^{p-2}(u(x)-u(y))K(x,y)\,dy\right| \leq c_\eps,
\end{align}
where $c_\eps$ is independent of $x$ and $c_\eps \to 0$ as $\eps \to 0$.
\end{lemma}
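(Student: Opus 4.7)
The plan is to mimic the proof of Lemma \ref{lemma:smallness of intBeps1} with a more careful bookkeeping that exploits the $C^{2}_\beta$ structure of $u$ near critical points. A key preliminary observation is that since $p\leq \tfrac{2}{2-s}$, the threshold $\tfrac{sp}{p-1}\geq 2$, and hence the assumption $\beta>\tfrac{sp}{p-1}$ automatically forces $\beta>2$. As a consequence, the $C^{2}_\beta$ Hessian bound $|D^{2} u(\xi)|\leq C\,d_u(\xi)^{\beta-2}$ decays rather than blows up as one approaches $N_u$. Picking $x_0\in N_u$ with $|x-x_0|=d_u(x)<\eps$, the segment $[x_0,x]$ lies in $B_\eps(x)\subset D$, and integration of $D^{2} u$ along it produces the upper bound $|\nabla u(x)|\leq C\,d_u(x)^{\beta-1}$, matching up to constants the built-in $C^{2}_\beta$ lower bound $|\nabla u(x)|\geq c\,d_u(x)^{\beta-1}$.

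Following verbatim the algebraic manipulations from the proof of Lemma \ref{lemma:smallness of intBeps1}, namely subtracting the affine function $\ell(y)=u(x)+\nabla u(x)\cdot(y-x)$, applying Lemma \ref{lemma:affines in kernel}, Lemma \ref{lemma:simple inequality}, Lemma \ref{lemma:stupid integral}, and Lemma \ref{lemma:spherical estimate}, and then passing to spherical coordinates, the bound in \eqref{eq:intBeps bound2} will reduce to
\[
\left|\int_{B_\eps(x)}|u(x)-u(y)|^{p-2}(u(x)-u(y))K(x,y)\,dy\right|\leq c\int_0^\eps \tau(r)\bigl(|\nabla u(x)|+\tau(r)r\bigr)^{p-2}r^{p(1-s)-1}\,dr,
\]
where $\tau(r):=\sup_{|\xi-x|\leq r}|D^{2} u(\xi)|$. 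Unlike in Lemma \ref{lemma:smallness of intBeps1}, both $|\nabla u(x)|$ and $\tau(r)$ can now degenerate as $d_u(x)\to 0$, so that neither of the two natural upper bounds $(\tau(r)r)^{p-2}$ and $|\nabla u(x)|^{p-2}$, both valid in the singular range $p<2$, can be used on the whole integration range.

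The remedy is to split the $r$-integration at $r=d_u(x)$. On $(0,d_u(x)]$ the ball $B_r(x)$ avoids $N_u$, hence $d_u(\xi)\leq 2\,d_u(x)$ for every $\xi\in B_r(x)$ and therefore $\tau(r)\leq C\,d_u(x)^{\beta-2}$; here $(|\nabla u(x)|+\tau(r)r)^{p-2}\leq |\nabla u(x)|^{p-2}\leq c\,d_u(x)^{(\beta-1)(p-2)}$, and explicit integration produces a bound of order $d_u(x)^{\beta(p-1)-sp}$. On $(d_u(x),\eps]$ one has $B_r(x)\subset B_{2r}(x_0)$, whence $\tau(r)\leq C\,r^{\beta-2}$; here $(|\nabla u(x)|+\tau(r)r)^{p-2}\leq(\tau(r)r)^{p-2}\leq C\,r^{(\beta-1)(p-2)}$, reducing the integrand to a constant times $r^{\beta(p-1)-sp-1}$, which integrates to a bound of order $\eps^{\beta(p-1)-sp}$. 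Both pieces are therefore controlled by $c\,\eps^{\beta(p-1)-sp}$, and the hypothesis $\beta>\tfrac{sp}{p-1}$ is exactly what guarantees that this common exponent is strictly positive, so that $c_\eps\to 0$ uniformly in $x$. The main technical obstacle is precisely this case split: opposite upper bounds for $\tau(r)$ and for $(|\nabla u(x)|+\tau(r)r)^{p-2}$ have to be used in the two subranges, and it is a small arithmetical miracle that the threshold $\beta>\tfrac{sp}{p-1}$ is the unique value that makes the two resulting powers of $\eps$ coincide.
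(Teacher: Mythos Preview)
Your proposal is correct and follows essentially the same route as the paper: subtract the affine part, apply Lemmas \ref{lemma:affines in kernel}, \ref{lemma:simple inequality}, and \ref{lemma:spherical estimate}, then split the radial integral at $r=d_u(x)$ and show that both pieces are bounded by $c\,\eps^{\beta(p-1)-sp}$. The only cosmetic difference is that the paper carries the single upper bound $\tau(r)\leq c\,(d_u(x)+r)^{\beta-2}$ through the whole computation before splitting, whereas you use the two separate bounds $\tau(r)\leq C\,d_u(x)^{\beta-2}$ and $\tau(r)\leq C\,r^{\beta-2}$ on the respective subranges; the resulting estimates are identical. (Your upper bound $|\nabla u(x)|\leq C\,d_u(x)^{\beta-1}$ is true but not actually needed---only the $C^2_\beta$ lower bound on $|\nabla u(x)|$ enters the argument.)
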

\begin{proof} Again, if $\nabla u (x)=0$, the result is quite obvious since then $u\in C^{2}_{\beta}(D)$ implies
$$
|u(x)-u(y)|\leq C|x-y|^\beta, 
$$
for some constant $C$. We therefore focus on the case $\nabla u(x)\neq 0$. Let $\ell(y):=u(x)+\nabla u(x)\cdot(y-x)$ be the affine part of $u$ near $x$. Throughout the proof, $c$ will denote a constant depending on $n$, $p$, $s$, $\Lambda$, $\beta$, and $\|u\|_{C_\beta^2(D)}$. Denoting by $g(t):=|t|^{p-2}t$, we have by Lemma \ref{lemma:affines in kernel} and Lemma
\ref{lemma:simple inequality}
\begin{align*}
&\left|\int_{B_\eps(x)}|u(x)-u(y)|^{p-2}(u(x)-u(y))K(x,y)\,dy\right| \\
&\qquad \leq \int_{B_\eps(x)}\left|g(u(x)-u(y))-g(\ell(x)-\ell(y))\right|K(x,y)\,dy \\
&\qquad \leq c\int_{B_\eps(x)}\big(|\ell(x)-\ell(y)|+|u(y)-\ell(y)|\big)^{p-2}|u(y)-\ell(y)|K(x,y)\,dy \\
&\qquad = c\int_{B_\eps(x)}\big(|\nabla u(x) \cdot(y-x)|+|u(y)-\ell(y)|\big)^{p-2}|u(y)-\ell(y)|K(x,y)\,dy.
\end{align*}
After a change of variables $z=y-x$ this becomes
\begin{align*}
&\left|\int_{B_\eps(x)}|u(x)-u(y)|^{p-2}(u(x)-u(y))K(x,y)\,dy\right| \\
&\qquad \leq c\int_{B_\eps(0)}\Big(|\nabla u(x) \cdot z|+\sup_{|\xi-x|<|z|}|D^{2}u(\xi)||z|^{2}\Big)^{p-2}\!\!\!\!\sup_{|\xi-x|<|z|}|D^{2}u(\xi)||z|^{2}K(z)\,dz \\
&\qquad \leq c\int_0^{\eps}\int_{S^{n}}\bigg(|\nabla u(x) \cdot \omega|r+\sup_{B_r(x)}|D^{2}u|r^{2}\bigg)^{p-2}\sup_{B_r(x)}|D^{2}u|r^{2-n-sp+n-1}\,d\omega\,dr \\
&\qquad \leq c\int_0^{\eps}\int_{S^{n}}\left(\frac{|\nabla u(x) \cdot \omega|}{|\nabla u(x)|}+\frac{(d_u(x)+r)^{\beta-2}r}{|\nabla u(x)|}\right)^{p-2} (d_u(x)+r)^{\beta-2} \\
&\qquad \qquad\qquad\qquad \times |\nabla u(x)|^{p-2}r^{p(1-s)}\,d\omega\frac{dr}{r},
\end{align*}
where we used the monotonicity of $(a+b)^{p-2}b$ with respect to $b$ when $a,b \geq 0$, the upper bound on $K(z)$, and the upper bound on $|D^2 u|$ in $C^2_\beta$. Applying Lemma \ref{lemma:spherical estimate} and splitting the integral into two parts, we obtain
\begin{align*}
&\left|\int_{B_\eps(x)}|u(x)-u(y)|^{p-2}(u(x)-u(y))K(x,y)\,dy\right| \\
&\qquad \leq c\int_0^{d_u(x)}\left(1+\frac{(d_u(x)+r)^{\beta-2}r}{|\nabla u(x)|}\right)^{p-2} (d_u(x)+r)^{\beta-2} |\nabla u(x)|^{p-2}r^{p(1-s)}\frac{dr}{r} \\
&\qquad\quad +c\int_{d_u(x)}^{\eps}\left(1+\frac{(d_u(x)+r)^{\beta-2}r}{|\nabla u(x)|}\right)^{p-2} (d_u(x)+r)^{\beta-2} |\nabla u(x)|^{p-2}r^{p(1-s)}\frac{dr}{r} \\[1ex]
&\qquad =: I_1 + I_2.
\end{align*}
The first integral can be estimated as
\begin{align*}
I_1 &\leq c\int_0^{d_u(x)} d_u(x)^{\beta-2} |\nabla u(x)|^{p-2}r^{p(1-s)}\frac{dr}{r} \\
&\leq c\,d_u(x)^{\beta-2} d_u(x)^{(\beta-1)(p-2)}d_u(x)^{p(1-s)} \\[1ex]
&\leq c\,\eps^{\beta(p-1)-sp}
\end{align*}
by the lower bound on $|\nabla u|$ in $C^2_\beta$ and the fact $\beta>\frac{sp}{p-1}$. 
For the second integral, we have 
\begin{align*}
I_2 &\leq c\int_{d_u(x)}^{\eps}\left(\frac{r^{\beta-1}}{|\nabla u(x)|}\right)^{p-2} r^{\beta-2} |\nabla u(x)|^{p-2}r^{p(1-s)}\frac{dr}{r} \\
&= c\int_{d_u(x)}^{\eps}r^{\beta(p-1)-sp}\frac{dr}{r} \\[1ex]
&\leq c\,\eps^{\beta(p-1)-sp}
\end{align*}
since $\beta>\frac{sp}{p-1}$. Combining our estimates for $I_1$ and $I_2$, 
we obtain \eqref{eq:intBeps bound2}.
\end{proof}

\subsection{Continuity properties} \label{ss.CP}

We are now ready to prove that $\L\phi$ is continuous for appropriate $\phi$ (as in Definition \ref{def:viscosity super}).

\begin{lemma} \label{lemma:Lphi cont}
Let $B_r(x_0) \subset \Omega$ and $\phi \in C^{2}(B_r(x_0)) \cap L^{p-1}_{sp}(\ern)$.
If $1<p\leq \frac{2}{2-s}$ and $\nabla \phi(x_0)=0$, we further assume that $\phi \in C^{2}_{\beta}(B_r(x_0))$
with $\beta > \frac{sp}{p-1}$.
Then $\L \phi$ is continuous in $B_{r}(x_0)$.
\end{lemma}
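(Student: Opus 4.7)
The plan is to fix $x_1 \in B_r(x_0)$, choose $\delta>0$ with $\overline{B}_\delta(x_1) \Subset B_r(x_0)$, and for each small $\eta\in(0,\delta/2)$ split
\[
\mathcal{L}\phi(x) = I_\eta(x) + J_\eta(x),
\]
where $I_\eta(x)$ is the principal-value integral on $B_\eta(x)$ and $J_\eta(x)$ the absolutely convergent integral on the complement $\ern\setminus B_\eta(x)$. The goal is to show that $|I_\eta(x)|\leq c_\eta$ uniformly for $x\in B_\delta(x_1)$ with $c_\eta\to 0$, while $J_\eta$ is continuous at $x_1$ for each fixed $\eta>0$. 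Then $\mathcal{L}\phi$ is the uniform limit of continuous functions on a neighborhood of $x_1$ and hence continuous at $x_1$.

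For the uniform smallness of $I_\eta$, I would apply Lemmas~\ref{lemma:smallness of intBeps1} and~\ref{lemma:smallness of intBeps2}. If $p>\frac{2}{2-s}$, Lemma~\ref{lemma:smallness of intBeps1} with any fixed $D$ satisfying $\overline{B}_\delta(x_1)\subset D \Subset B_r(x_0)$ directly yields the bound. If $1<p\leq \frac{2}{2-s}$, I would split by location of $x$: when $d_\phi(x)>\eta$, the 1-Lipschitz continuity of $d_\phi$ gives $B_\eta(x)\Subset \{d_\phi>0\}$, so Lemma~\ref{lemma:smallness of intBeps1} still applies; when $d_\phi(x)\leq \eta<1$, Lemma~\ref{lemma:smallness of intBeps2} applies with $\varepsilon=\eta$, invoking the hypothesis $\phi\in C^2_\beta(B_r(x_0))$ with $\beta>\tfrac{sp}{p-1}$. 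In every case the constant $c_\eta$ depends only on $n,p,s,\Lambda,\beta$, on $D$, and on $\|\phi\|_{C^2(D)}$ or $\|\phi\|_{C^2_\beta(B_r(x_0))}$, and tends to $0$ as $\eta\to 0$, independently of $x$.

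For the continuity of $J_\eta$ at $x_1$ with $\eta>0$ fixed, I would pick a radius $\rho\in(\eta,\delta)$ and split
\[
J_\eta(x) = \int_{B_\rho(x_1) \setminus B_\eta(x)} g(\phi(x)-\phi(y)) K(x,y)\,dy + \int_{\ern \setminus B_\rho(x_1)} g(\phi(x)-\phi(y)) K(x,y)\,dy,
\]
where $g(t)=|t|^{p-2}t$. In the near integral, for $|x-x_1|<\eta/2$ one has $|x-y|\geq \eta/2$ on the integrand's support, and since $\phi$ is $C^2$ on $B_\rho(x_1)$ the integrand is uniformly bounded in $x$ on the bounded set $B_\rho(x_1)$; together with $\chi_{\ern\setminus B_\eta(x)}\to \chi_{\ern\setminus B_\eta(x_1)}$ a.e.\ and the continuity of $K$ from assumption (iv), dominated convergence yields the desired continuity. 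In the tail integral, for $x$ close to $x_1$ one has $|x-y|\geq \rho-\eta/2$, and the integrand is bounded by $c\bigl(1+|\phi(y)|^{p-1}\bigr)(1+|y|)^{-n-sp}$, integrable thanks to $\phi\in L^{p-1}_{sp}(\ern)$; pointwise convergence of $K(x,y)\to K(x_1,y)$ combined with dominated convergence again gives continuity.

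The main obstacle is precisely the uniform smallness of $I_\eta$ in the singular range $p\leq \frac{2}{2-s}$: near critical points of $\phi$ the gradient-based control underlying Lemma~\ref{lemma:smallness of intBeps1} breaks down, and one must fall back on the $C^2_\beta$ structure via Lemma~\ref{lemma:smallness of intBeps2}. The dichotomy based on comparing $d_\phi(x)$ with $\eta$ is what allows the two lemmas, taken together, to cover every $x$ uniformly in a neighborhood of $x_1$.
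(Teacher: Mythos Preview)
Your overall strategy---split $\mathcal{L}\phi$ into a near part $I_\eta$ and a far part $J_\eta$, control $I_\eta$ uniformly via Lemmas~\ref{lemma:smallness of intBeps1} and~\ref{lemma:smallness of intBeps2}, and pass to the limit in $J_\eta$ by dominated convergence---is exactly the paper's approach. The treatment of $J_\eta$ is fine and essentially matches the paper's (the paper does not split further into a bounded annulus and a tail, but your extra split is harmless).

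There is, however, a subtle uniformity issue in your dichotomy for the singular range $1<p\le\tfrac{2}{2-s}$ near a critical point of $\phi$. In the branch $d_\phi(x)>\eta$ you appeal to Lemma~\ref{lemma:smallness of intBeps1} with a set $D\supset B_\eta(x)$ satisfying $D\Subset\{d_\phi>0\}$. But then $D$ depends on $x$, and the constant produced by that lemma in the case $p<2$ is of the form $c\,\tau\,(\inf_D|\nabla\phi|)^{p-2}\eta^{p(1-s)}$; when $d_\phi(x)$ is only slightly larger than $\eta$, the ball $B_\eta(x)$ nearly touches the critical set and $\inf_D|\nabla\phi|$ can be arbitrarily small, so the bound is not uniform in $x$ on the fixed neighbourhood $B_\delta(x_1)$. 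Consequently the assertion that $J_\eta\to\mathcal{L}\phi$ \emph{uniformly} on $B_\delta(x_1)$ is not justified as written.

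The paper sidesteps this by not insisting on a fixed neighbourhood. It proves continuity at each $x$ directly: if $\nabla\phi(x)\neq 0$ one first fixes $\delta$ with $\overline B_\delta(x)\subset\{d_\phi>0\}$ and then chooses $\rho$ via Lemma~\ref{lemma:smallness of intBeps1} with the \emph{fixed} domain $D=B_\delta(x)$; if $\nabla\phi(x)=0$ one simply uses the neighbourhood $B_\rho(x)$ itself, noting that $d_\phi(y)<\rho$ for $|y-x|<\rho$, so Lemma~\ref{lemma:smallness of intBeps2} applies to every such $y$. In both cases the near integral is small on a neighbourhood (possibly shrinking with $\rho$) and the far integral converges by dominated convergence. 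This is logically weaker than your uniform-limit formulation but is all that is needed for continuity, and it avoids the dichotomy altogether. You can either adopt this organization, or patch your dichotomy by re-entering the proof of Lemma~\ref{lemma:smallness of intBeps2} and checking that its estimate $|I_\eta(x)|\le c\,\eta^{\beta(p-1)-sp}$ also holds when $d_\phi(x)\ge\eta$ (it does, by the same computations, since the second integral $I_2$ there is then empty and $I_1$ still yields the same power of $\eta$ once one uses $d_\phi(x)\ge\eta$).
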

\begin{proof}
Let $x \in B_{r}(x_0)$ and $\eps>0$. If $\nabla \phi(x) \neq 0$, then there is $\delta>0$ such that $\nabla \phi(y)\neq 0$ when $|x-y|\leq \delta$
by continuity. According to Lemma \ref{lemma:smallness of intBeps1}, we can then choose $\rho >0$ such that
\begin{align} \label{eq:Brho small}
\left|{\rm P.V.}\int_{B_{\rho}(y)}|\phi(y)-\phi(z)|^{p-2}(\phi(y)-\phi(z))K(y,z)\,dz\right| < \frac{\eps}{4}
\end{align}
whenever $|x-y|<\delta$. In fact, in the case $p>\frac{2}{2-s}$, we obtain \eqref{eq:Brho small} by Lemma \ref{lemma:smallness of intBeps1} regardless of the value of $\nabla \phi(x)$.
If, in turn, $1<p \leq \frac{2}{2-s}$ and $\nabla \phi(x)=0$, then $d_\phi(y) < \rho$ when $|x-y|<\rho$, and we have \eqref{eq:Brho small} in this case, as well, by Lemma \ref{lemma:smallness of intBeps2}.

Let us then consider the nonlocal contribution.
We may assume $|x-y|<\rho/3$.
Then we can estimate
\begin{align*}
&\chi_{\ern \setminus B_{\rho}(y)}(z)|\phi(y)-\phi(z)|^{p-1}K(y,z) \\[2ex]
&\qquad \leq c\,\chi_{\ern \setminus B_{\rho}(y)}(z)\left(|\phi(y)|^{p-1}+|\phi(z)|^{p-1}\right)|y-z|^{-n-sp} \\
&\qquad \leq c\,\chi_{\ern \setminus B_{2\rho/3}(x)}(z)\left(\sup_{B_{\rho/3}(x)}|\phi|^{p-1}+|\phi(z)|^{p-1}\right)|x-z|^{-n-sp},
\end{align*}
and consequently
\begin{align} \label{eq:tail convergence}
&\int_{\ern \setminus B_{\rho}(y)}|\phi(y)-\phi(z)|^{p-2}(\phi(y)-\phi(z))K(y,z)\,dz \nonumber \\
&\qquad \to \int_{\ern \setminus B_{\rho}(x)}|\phi(x)-\phi(z)|^{p-2}(\phi(x)-\phi(z))K(x,z)\,dz
\end{align}
as $y \to x$ by the dominated convergence theorem together with the assumption $\phi \in L^{p-1}_{sp}(\ern)$ and continuity of $K(\cdot,z)$ away from the diagonal.
Combining \eqref{eq:Brho small} with \eqref{eq:tail convergence}, we obtain
\[
|\L \phi(x)-\mathcal \L \phi(y)| < \eps
\]
when $|x-y|$ is small enough, and the proof is complete.
\end{proof}

The next lemma states that $\L$ is continuous with respect to perturbations that are regular enough.

\begin{lemma} \label{lemma:Lphi cont in eps}
Let $B_{r}(x_0) \subset \Omega$ and let $\phi \in C^{2}(B_r(x_0)) \cap L^{p-1}_{sp}(\ern)$
satisfy Definition \ref{def:viscosity super}(iii) (a) or (b) with  $\beta > \frac{sp}{p-1}$.
Then for every $\eps>0$ and $\rho'>0$ there exist $\theta'>0$, $\rho \in (0,\rho')$ and $\eta \in C^{2}_0(B_{\rho/2}(x_0))$ with $0 \leq \eta \leq 1$ and $\eta(x_0)=1$
such that $\phi_\theta := \phi + \theta \eta$ satisfies
\begin{align*}
\sup_{B_{\rho}(x_0)}|\L \phi-\L \phi_\theta| < \eps
\end{align*}
whenever $0\leq \theta<\theta'$.
\end{lemma}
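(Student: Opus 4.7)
The plan is to decompose $\L\phi(x)-\L\phi_\theta(x)=I_{\mathrm{loc}}(x)+I_{\mathrm{nl}}(x)$, where $I_{\mathrm{loc}}$ is the principal-value integral over $B_\sigma(x)$ and $I_{\mathrm{nl}}$ is the regular integral over $\ern\setminus B_\sigma(x)$, for a small $\sigma>0$ to be chosen. The local piece will be made small by choosing $\sigma$ small and applying Lemma~\ref{lemma:smallness of intBeps1} or Lemma~\ref{lemma:smallness of intBeps2} to $\phi$ and $\phi_\theta$ separately, while the nonlocal piece will be made small by taking $\theta$ small, exploiting that $\phi_\theta-\phi=\theta\eta$ is a uniformly small perturbation with compact support.

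I would first choose $\rho\in(0,\rho')$ so that $B_\rho(x_0)\Subset B_r(x_0)$ and either $|\nabla\phi|\geq|\nabla\phi(x_0)|/2$ on $B_\rho(x_0)$ (case (a) with $\nabla\phi(x_0)\neq 0$) or $x_0$ is the only critical point of $\phi$ in $B_\rho(x_0)$ (case (b)); in the remaining sub-case $p>\tfrac{2}{2-s}$ no such constraint is required. I would then take $\eta(x)=\zeta(|x-x_0|/\rho)$ with a radial plateau profile $\zeta\in C^\infty([0,\infty))$ satisfying $\zeta\equiv 1$ on $[0,1/4]$ and $\zeta\equiv 0$ on $[2/5,\infty)$. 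This plateau choice is essential: on $B_{\rho/4}(x_0)$ the function $\phi_\theta$ differs from $\phi$ only by the constant $\theta$, so the critical set and the $C^2_\beta$-bounds of $\phi_\theta$ agree with those of $\phi$ there; on the annulus $B_{\rho/2}(x_0)\setminus B_{\rho/4}(x_0)$, the bounds $|\nabla\eta|\leq C/\rho$ and $|D^2\eta|\leq C/\rho^2$ combined with $|\nabla\phi(x)|\geq c\rho^{\beta-1}$ (from $\phi\in C^2_\beta$) imply, for $\theta<\theta_1$ small enough depending on $\rho$, that $\phi_\theta$ has no critical points in the annulus and $\|\phi_\theta\|_{C^2_\beta(B_r(x_0))}\leq 2\|\phi\|_{C^2_\beta(B_r(x_0))}$, with a bound independent of $\theta$.

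With this preparation, Lemmas~\ref{lemma:smallness of intBeps1} and~\ref{lemma:smallness of intBeps2} applied to $\phi$ and $\phi_\theta$ give $|I_{\mathrm{loc}}(x)|\leq 2c_\sigma$ uniformly in $x\in B_\rho(x_0)$ and in $\theta\in[0,\theta_1]$, with $c_\sigma\to 0$ as $\sigma\to 0$, so I fix $\sigma$ such that $2c_\sigma<\eps/2$. For the nonlocal part, writing $g(t)=|t|^{p-2}t$ and applying Lemma~\ref{lemma:simple inequality} with $a=\phi(x)-\phi(y)$ and $b=\phi_\theta(x)-\phi_\theta(y)$, where $|a-b|=\theta|\eta(x)-\eta(y)|\leq 2\theta$, yields
\[
|g(a)-g(b)|\leq c\,\theta\,(|\phi(x)-\phi(y)|+2\theta)^{p-2}.
\]
For $1<p<2$ the right-hand side is at most $c\theta^{p-1}$, so $|I_{\mathrm{nl}}(x)|\leq c\theta^{p-1}\sigma^{-sp}$. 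For $p\geq 2$ I would use $(|a|+2\theta)^{p-2}\leq c(|\phi(x)|^{p-2}+|\phi(y)|^{p-2}+\theta^{p-2})$ and integrate against $|x-y|^{-n-sp}$; the only nontrivial term, $\int|\phi(y)|^{p-2}|x-y|^{-n-sp}dy$, is handled by H\"older's inequality with exponents $\tfrac{p-1}{p-2}$ and $p-1$, together with the tail assumption $\phi\in L^{p-1}_{sp}(\ern)$. In either case, $|I_{\mathrm{nl}}(x)|<\eps/2$ once $\theta<\theta'$ is taken small enough, proving the claim. The main obstacle, which the plateau construction of $\eta$ is tailored to resolve, is the uniform $C^2_\beta$-control of $\phi_\theta$ in case (b); without $\eta$ being constant near $x_0$, the perturbation would interact with the vanishing gradient of $\phi$ at $x_0$ and spoil the $|\nabla\phi_\theta|\gtrsim d_{\phi_\theta}^{\beta-1}$ comparison needed to invoke Lemma~\ref{lemma:smallness of intBeps2}.
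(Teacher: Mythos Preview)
Your proposal is correct and follows essentially the same approach as the paper: the same local/nonlocal decomposition, the same use of Lemmas~\ref{lemma:smallness of intBeps1}--\ref{lemma:smallness of intBeps2} for the local piece, and the same application of Lemma~\ref{lemma:simple inequality} with the $p<2$ / $p\geq 2$ case split (including the H\"older step for the $|\phi(y)|^{p-2}$ term) for the nonlocal piece. The only minor variation is in how the $C^2_\beta$-control of $\phi_\theta$ is secured in case~(b): the paper asks directly that $|D^2\eta|\leq M d_\eta^{\beta-2}$ and uses $d_\eta\leq d_\phi$, whereas you take a plain radial plateau and absorb the $|D^2\eta|\leq C/\rho^2$ bound by making $\theta$ small relative to $\rho^\beta$; both routes yield the required uniform bound.
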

\begin{proof}
Let $\eps>0$ and $\rho'>0$. Firstly, if $\nabla \phi(x_0) \neq 0$, there exist $\rho \in (0,\rho')$ and $\tau>0$ such that $|\nabla \phi| > \tau$ in $B_{2\rho}(x_0)$ by continuity. Letting $\eta \in C^{2}_0(B_{\rho/2}(x_0))$ satisfy $0 \leq \eta \leq 1$ and $\eta(x_0)=1$,
we then have $|\nabla \phi_\theta|>\tau/2$ in $B_{2\rho}(x_0)$ when $0\leq\theta<\theta''$ for some $\theta''>0$. According to Lemma \ref{lemma:smallness of intBeps1}, we can now take such a small $\delta>0$ that for every $x \in B_{\rho}(x_0)$
and every $\theta$ as above it holds
\begin{align} \label{eq:intB phitheta small}
\left|{\rm P.V.}\int_{B_{\delta}(x)}|\phi_\theta(x)-\phi_\theta(y)|^{p-2}(\phi_\theta(x)-\phi_\theta(y))K(x,y)\,dy\right| < \frac{\eps}{4}.
\end{align}
If $p>\frac{2}{2-s}$, we obtain \eqref{eq:intB phitheta small} by Lemma \ref{lemma:smallness of intBeps1} regardless of the value of $\nabla \phi(x_0)$.

Let us then consider the case $1<p\leq \frac{2}{2-s}$ and $\nabla \phi(x_0)=0$. Since $x_0$ is an isolated critical point of $\phi$ by assumption, we can also assume that $\rho$ is chosen so small that $|\nabla \phi| \neq 0$ in $B_{3\rho}(x_0) \setminus \{x_0\}$. Let $\eta \in C^{2}_0(B_{\rho/2}(x_0))$ satisfy $0 \leq \eta \leq 1$, $\eta=1$ in $B_{\rho/4}(x_0)$ and $|D^{2}\eta|\leq M d_\eta^{\beta-2}$ for some constant $M>0$. Then, in particular, $\nabla \phi_\theta \neq 0$ in $B_{2\rho}(x_0) \setminus \{x_0\}$ when $\theta$ is small enough, and consequently $d_{\phi_\theta} = d_{\phi}$ in $B_\rho(x_0)$ for all such $\theta$. Also, $\frac12 |\nabla \phi| \leq |\nabla \phi_\theta| \leq 2|\nabla \phi|$ in $B_\rho(x_0)$ when $\theta$ is small enough.
Moreover, we can estimate
\[
|D^{2}\phi_\theta| \leq |D^{2}\phi|+\theta|D^{2}\eta| \leq \| \phi \|_{C_\beta^2(B_\rho(x_0))} d_\phi^{\beta-2}+\theta M d_\eta^{\beta-2} \leq c\,d_{\phi_\theta}^{\beta-2}
\]
in $B_\rho(x_0)$ whenever $\theta$ is small enough, since $d_\eta \leq d_\phi = d_{\phi_\theta}$ in $B_\rho(x_0)$.
Thus $\phi_\theta \in C_{\beta}^2(B_{\rho}(x_0))$,  and according to Lemma \ref{lemma:smallness of intBeps2}, we find $\delta \in(0,\rho)$ such that~\eqref{eq:intB phitheta small} holds also in this case.

Letting now $x \in B_{\rho}(x_0)$ and denoting by $g(t):=|t|^{p-2}t$, we can estimate by \eqref{eq:intB phitheta small} and Lemma \ref{lemma:simple inequality} as
\begin{align*}
&\left|\L \phi(x)-\L \phi_\theta(x)\right| \\
&\qquad \leq \frac{\eps}{2} +\int_{\ern \setminus B_{\delta}(x)} \left|g(\phi(x)-\phi(y))-g(\phi_\theta(x)-\phi_\theta(y))\right|K(x,y)\,dy \\
&\qquad \leq \frac{\eps}{2} +c\int_{\ern \setminus B_{\delta}(x)} 2\theta\big(|\phi(x)-\phi(y)|+2\theta\big)^{p-2} |x-y|^{-n-sp}\,dy,
\end{align*}
where we also used monotonicity of $(a+b)^{p-2}b$ with respect to $b$ when $a,b \geq 0$ together with
\[
|\phi(x)-\phi(y)-\phi_\theta(x)+\phi_\theta(y)| \leq |\phi(x)-\phi_\theta(x)|+|\phi(y)-\phi_\theta(y)| \leq 2\theta.
\]
If $1<p<2$, we can simply continue estimating
\begin{align*}
\left|\L \phi(x)-\L \phi_\theta(x)\right|
&\leq \frac{\eps}{2}+c\,\theta^{p-1}\int_{\ern \setminus B_{\delta}(x)} |x-y|^{-n-sp}\,dy \\
&\leq \frac{\eps}{2}+c\,\delta^{-sp}\theta^{p-1}<\eps
\end{align*}
when $\theta$ is small enough. If $p\geq 2$, in turn, we obtain
\begin{align*}
&\left|\L \phi(x)-\L \phi_\theta(x)\right| \\
&\qquad \leq \frac{\eps}{2}+c\int_{\ern \setminus B_{\delta}(x)}\theta\left(\theta^{p-2}+|\phi(x)|^{p-2}+|\phi(y)|^{p-2}\right)|x-y|^{-n-sp}\,dy \\
&\qquad \leq \frac{\eps}{2}+c\,\delta^{-sp}\theta^{p-1}+c\,\delta^{-sp}\theta \sup_{B_{\rho}(x_0)}|\phi|^{p-2} +c\,\delta^{-sp}\theta\sup_{\xi \in B_{\rho}(x_0)}{\rm Tail}(\phi;\xi,\delta)^{p-2},
\end{align*}
where we used H\"older's inequality to estimate
\begin{align*}
&\int_{\ern \setminus B_{\delta}(x)}|\phi(y)|^{p-2}|x-y|^{-n-sp}\,dy \\
&\qquad \leq \left(\int_{\ern \setminus B_{\delta}(x)}|x-y|^{-n-sp}\,dy\right)^{\frac{1}{p-1}}
\left(\int_{\ern \setminus B_{\delta}(x)}|\phi(y)|^{p-1}|x-y|^{-n-sp}\,dy\right)^{\frac{p-2}{p-1}} \\
&\qquad \leq c\,\delta^{-sp/(p-1)} \delta^{-sp(p-2)/(p-1)} {\rm Tail}(\phi;x,\delta)^{p-2} \\[2ex]
&\qquad \leq c\,\delta^{-sp}\sup_{\xi \in B_{\rho}(x_0)}{\rm Tail}(\phi;\xi,\delta)^{p-2}.
\end{align*}
Thus we get
\[
\left|\L \phi(x)-\L \phi_\theta(x)\right| < \eps
\]
in this case, as well, whenever $\theta$ is small enough. The claim follows by taking the supremum over $x \in B_{\rho}(x_0)$.
\end{proof}

The following lemma establishes the expected result that any $C^2$-supersolution is also a weak supersolution.
\begin{lemma} \label{lemma:super weak super} Let $u \in C^{2}(B_r(x_0)) \cap L^{p-1}_{sp}(\ern)$ and if $1<p\leq \frac{2}{2-s}$ and $\nabla u(x_0)=0$, we further assume that $u \in C^{2}_{\beta}(B_r(x_0))$
with  $\beta > \frac{sp}{p-1}$. If $\mathcal \L u \geq 0$ in the pointwise sense in $B_r(x_0)$, then $u$ is a continuous weak supersolution in $B_r(x_0)$.
\end{lemma}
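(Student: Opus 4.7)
The plan is to derive the weak formulation directly from the pointwise inequality $\L u \geq 0$ by symmetrizing in the double integral and passing to the limit in the principal-value truncation. First, the regularity demanded in Definition~\ref{def:weaksuper} is automatic: $u \in C^{2}(B_r(x_0))$ is locally Lipschitz and hence locally in $W^{s,p}$, and $u \in L^{p-1}_{sp}(\ern)$ is built into the hypothesis. Continuity is clear, and Lemmas~\ref{lemma:smallness of intBeps1} and~\ref{lemma:smallness of intBeps2} together with Lemma~\ref{lemma:Lphi cont} ensure that $\L u$ is well defined and continuous on $B_r(x_0)$.

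Fix a nonnegative $\phi \in C_0^\infty(B_r(x_0))$ and set
\begin{equation*}
J := \int_{\ern}\int_{\ern} |u(x)-u(y)|^{p-2}(u(x)-u(y))(\phi(x)-\phi(y)) K(x,y)\, dx\, dy.
\end{equation*}
This integral is absolutely convergent: near the diagonal the integrand is controlled by $c|x-y|^{p(1-s)-n}$ using $u,\phi \in C^1_{\mathrm{loc}}$ and the upper bound on $K$, which is integrable on bounded sets since $p(1-s)>0$; away from the diagonal, compactness of $\spt \phi$ together with $u \in L^{p-1}_{sp}(\ern)$ controls the integrand. By dominated convergence, $J = \lim_{\eps \to 0} J_\eps$, where $J_\eps$ denotes the same integral restricted to $\{|x-y| > \eps\}$.

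On the truncated region, symmetry of $K$, antisymmetry of $|u(x)-u(y)|^{p-2}(u(x)-u(y))$ in $(x,y)$, and Fubini's theorem yield the standard symmetrization
\begin{equation*}
J_\eps = 2 \int_{\ern} \phi(x) F_\eps(x) \, dx, \quad F_\eps(x) := \int_{\ern \setminus B_\eps(x)} |u(x)-u(y)|^{p-2}(u(x)-u(y)) K(x,y)\, dy.
\end{equation*}
For $x \in \spt\phi$, the difference $\L u(x) - F_\eps(x)$ equals the principal-value integral over $B_\eps(x)$, which by Lemmas~\ref{lemma:smallness of intBeps1} and~\ref{lemma:smallness of intBeps2} is bounded by a quantity $c_\eps \to 0$ uniformly in $x$ on the compact set $\spt\phi$. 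Thus $F_\eps \to \L u$ uniformly on $\spt \phi$, so $J_\eps \to 2\int \phi\, \L u\, dx \geq 0$ by the hypothesis and $\phi \geq 0$. Combined with $J_\eps \to J$, this gives $J \geq 0$, proving that $u$ is a weak supersolution.

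The main technical point is the uniform-on-compacts convergence $F_\eps \to \L u$, which is exactly the content of the uniform-in-$x$ estimates of Lemmas~\ref{lemma:smallness of intBeps1} and~\ref{lemma:smallness of intBeps2}; without this uniformity the formal symmetrization identity cannot be upgraded to the weak formulation. Note that all three sub-cases of the pointwise lemmas ($p>\tfrac{2}{2-s}$; $1<p\leq \tfrac{2}{2-s}$ with $\nabla u$ nonvanishing near $x$; and $1<p\leq \tfrac{2}{2-s}$ with $x$ close to a critical point of $u$) are covered by the hypothesis on $u$, so the uniform estimate indeed extends to all of $\spt\phi$.
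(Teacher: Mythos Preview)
Your proof is correct and follows essentially the same route as the paper: symmetrize the $\eps$-truncated double integral into $2\int \phi\, F_\eps$, invoke the uniform-in-$x$ estimates of Lemmas~\ref{lemma:smallness of intBeps1}--\ref{lemma:smallness of intBeps2} to control $\L u - F_\eps$ on $\spt\phi$, and pass to the limit by dominated convergence (the paper also carries out the explicit integrability check for the dominating function). The only cosmetic difference is that the paper phrases the uniform estimate as an inequality $F_\eps \geq -\delta_\eps$ with $\delta_\eps \to 0$ uniformly (citing Lemma~\ref{lemma:Lphi cont}), whereas you identify the full limit $F_\eps \to \L u$; the underlying argument is identical.
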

\begin{proof}
Clearly $u \in W^{s,p}_{\rm loc}(B_r(x_0))$. Let $\phi \in C^{\infty}_0(B_r(x_0))$ be a nonnegative test function.
Since $\L u \geq 0$, we have by the definition of $\L$ that
\[
\int_{\ern \setminus B_\eps(x)}|u(x)-u(y)|^{p-2}(u(x)-u(y))K(x,y)\,dy \geq -\delta_\eps(x), \qquad x \in \supp\phi,
\]
for every $\eps>0$, where $\delta_\eps(x) \to 0$ uniformly as $\eps \to 0$ due to the continuity of $\L u$, i.e. Lemma \ref{lemma:Lphi cont}.
Multiplying the above inequality by $\phi(x)$ and integrating over $\ern$, we obtain
\begin{align}
&\int_{\ern} \int_{\ern}\big(1-\chi_{B_\eps(x)}(y)\big)|u(x)-u(y)|^{p-2}(u(x)-u(y))\phi(x)K(x,y)\,dydx \nonumber \\
&\qquad \geq -\int_{\ern}\delta_\eps(x)\phi(x)\,dx. \label{eq:s1}
\end{align}
Interchanging the roles of $x$ and $y$ yields
\begin{align}
&\int_{\ern} \int_{\ern}\big(1-\chi_{B_\eps(y)}(x)\big)|u(x)-u(y)|^{p-2}(u(y)-u(x))\phi(y)K(x,y)\,dxdy \nonumber \\
&\qquad \geq -\int_{\ern}\delta_\eps(x)\phi(x)\,dx\label{eq:s2}
\end{align}
by the symmetry of $K$. Summing up \eqref{eq:s1} and \eqref{eq:s2} and changing order of integration in the first one, we obtain
\[
\int_{\ern} \int_{\ern \setminus B_\eps(y)}|u(x)-u(y)|^{p-2}(u(x)-u(y))(\phi(x)-\phi(y))K(x,y)\,dxdy \geq -2\|\delta_\eps\phi\|_{L^{1}}
\]
for every $\eps>0$. Letting now $\eps \to 0$, we have
\[
\int_{\ern} \int_{\ern}|u(x)-u(y)|^{p-2}(u(x)-u(y))(\phi(x)-\phi(y))K(x,y)\,dxdy \geq 0
\]
by the dominated convergence theorem.
To see that the integrand has an integrable upper bound, let $\supp \phi \subset B_\rho \Subset B_r(x_0)$ and estimate by H\"older's inequality
\begin{align*}
&\int_{\ern} \int_{\ern}|u(x)-u(y)|^{p-1}|\phi(x)-\phi(y)|K(x,y)\,dxdy \\
&\qquad \leq c\int_{B_\rho} \int_{B_\rho}|u(x)-u(y)|^{p-1}|\phi(x)-\phi(y)|\frac{dxdy}{|x-y|^{n+sp}} \\
&\qquad\quad + c\int_{\ern \setminus B_\rho} \int_{\supp\phi}|u(x)-u(y)|^{p-1}\phi(x)|x-y|^{-n-sp}\,dxdy \\
&\qquad \leq c\left(\int_{B_\rho} \int_{B_\rho}\frac{|u(x)-u(y)|^{p}}{|x-y|^{n+sp}}\,dxdy\right)^{\frac{p-1}{p}}
\left(\int_{B_\rho} \int_{B_\rho}\frac{|\phi(x)-\phi(y)|^{p}}{|x-y|^{n+sp}}\,dxdy\right)^{\frac1p} \\
&\qquad\quad + c\int_{\ern \setminus B_d(z)} \int_{\supp\phi}\left(|u(x)|^{p-1}+|u(y)|^{p-1}\right)\phi(x)|z-y|^{-n-sp}\,dxdy \\
&\qquad \leq c\,\|u\|_{W^{s,p}(B_\rho)}^{p-1}\|\phi\|_{W^{s,p}(B_\rho)}+c\,\|\phi\|_{L^{1}(B_\rho)}{\rm Tail}(u;z,d)^{p-1} \\[1ex]
&\qquad < \infty,
\end{align*}
where $z \in \supp\phi$ and $d:=\dist(z,\partial B_\rho)$. We conclude that $u$ is a weak supersolution in $B_r(x_0)$.
\end{proof}

Finally, we conclude this section with the result saying that whenever we can touch an $(s,p)$-viscosity supersolution from below with a $C^2$-function, then the principal value is well-defined  and nonnegative at that touching point.
\begin{proposition} \label{prop:principal value}
Let $u$ be an $(s,p)$-viscosity supersolution in $\Omega$. Assume that there is a $C^2$-function $\phi$
touching $u$ from below at $x \in \Omega$, i.e., there is $r>0$ such that
\[
\phi(x) = u(x) \qquad \mbox{and} \qquad \phi \leq u \,\text{ in } B_r(x) \subset \Omega.
\]
If $\phi$ satisfies (a) or (b) with $\beta > \frac{sp}{p-1}$  in Definition \ref{def:viscosity super}(iii), then the principal value $\L u(x)$ exists and is nonnegative.
\end{proposition}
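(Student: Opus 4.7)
The plan is to apply the viscosity supersolution property at $x$ using the test function $\phi$ on a family of shrinking balls $B_\rho(x)$, $\rho \in (0,r)$, and then pass to the limit $\rho \to 0$. For each such $\rho$, define
\[
\phi_\rho(y) := \begin{cases} \phi(y), & y \in B_\rho(x), \\ u(y), & y \in \ern \setminus B_\rho(x). \end{cases}
\]
Since $\phi$ touches $u$ from below at $x$ inside $B_\rho(x) \subset B_r(x)$ and still satisfies (a) or (b) of Definition~\ref{def:viscosity super}(iii) with $\beta>\tfrac{sp}{p-1}$, the viscosity condition gives $\L\phi_\rho(x) \geq 0$. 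I would then split
\[
\L\phi_\rho(x) = A_\rho + B_\rho,
\]
where
\begin{align*}
A_\rho &:= \mathrm{P.V.}\!\int_{B_\rho(x)} |\phi(x)-\phi(y)|^{p-2}(\phi(x)-\phi(y))K(x,y)\,dy, \\
B_\rho &:= \int_{\ern\setminus B_\rho(x)} |u(x)-u(y)|^{p-2}(u(x)-u(y))K(x,y)\,dy.
\end{align*}
Lemma~\ref{lemma:smallness of intBeps1} (under (a)) or Lemma~\ref{lemma:smallness of intBeps2} (under (b)), applied to $\phi$ at $x$, gives $|A_\rho| \to 0$ as $\rho \to 0$; combined with $\L\phi_\rho(x) \geq 0$, this yields $\liminf_{\rho \to 0} B_\rho \geq 0$.

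The next step is to establish that $\lim_{\rho\to 0}B_\rho$ actually exists, which is precisely the statement that the principal value $\L u(x)$ exists. The key observation is that $t \mapsto |t|^{p-2}t$ is monotone increasing and, since $u(x)=\phi(x)$ and $u\geq\phi$ in $B_r(x)$, one has $u(x)-u(y)\leq \phi(x)-\phi(y)$ for every $y\in B_r(x)$. Hence for $0<\rho_2<\rho_1<r$,
\[
B_{\rho_2} - B_{\rho_1} = \int_{B_{\rho_1}\setminus B_{\rho_2}} |u(x)-u(y)|^{p-2}(u(x)-u(y))K(x,y)\,dy \leq A_{\rho_1}-A_{\rho_2}.
\]
Given $\varepsilon>0$, choose $\delta>0$ so that $|A_\rho|<\varepsilon/2$ for all $\rho<\delta$; then $B_{\rho_2}\leq B_{\rho_1} + \varepsilon$ whenever $0<\rho_2<\rho_1<\delta$. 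Taking $\limsup_{\rho_2\to 0}$ gives $\limsup_{\rho\to 0} B_\rho \leq B_{\rho_1}+\varepsilon$ for every $\rho_1<\delta$, and then $\liminf_{\rho_1\to 0}$ produces $\limsup_{\rho\to 0} B_\rho \leq \liminf_{\rho\to 0} B_\rho + \varepsilon$. Sending $\varepsilon\to 0$ forces $\limsup=\liminf$, so the limit exists, and by the previous step it is nonnegative. By definition of the principal value this limit equals $\L u(x)$, concluding the proof.

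The main obstacle is the existence of the limit, as opposed to the easier lower bound $\liminf_{\rho\to 0}B_\rho\geq 0$ extracted directly from the viscosity inequality. A priori $u$ is only lower semicontinuous and could be unbounded from above near $x$, so the annular differences $B_{\rho_2}-B_{\rho_1}$ could in principle fluctuate as $\rho\to 0$. The monotonicity of $t\mapsto |t|^{p-2}t$ together with the one-sided touching hypothesis is what tames this: it dominates the annular $u$-integrals from above by the corresponding $\phi$-integrals, which vanish by the uniform smallness estimates of Lemmas~\ref{lemma:smallness of intBeps1}--\ref{lemma:smallness of intBeps2}.
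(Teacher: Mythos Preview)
Your proof is correct and shares the same core with the paper's argument: both exploit the viscosity inequality $\L\phi_\rho(x)\geq 0$, the smallness of the local $\phi$-integral from Lemmas~\ref{lemma:smallness of intBeps1}--\ref{lemma:smallness of intBeps2}, and the pointwise inequality
\[
|u(x)-u(y)|^{p-2}(u(x)-u(y)) \leq |\phi(x)-\phi(y)|^{p-2}(\phi(x)-\phi(y)), \qquad y\in B_r(x),
\]
coming from the monotonicity of $t\mapsto|t|^{p-2}t$ and the touching condition. The only difference is organizational. The paper fixes one $\rho$, writes
\[
\int_{B_\rho\setminus B_\delta}\! |u(x)-u(y)|^{p-2}(u(x)-u(y))K\,dy
= \int_{B_\rho\setminus B_\delta}\!\big(\cdots_u - \cdots_\phi\big)K\,dy + \int_{B_\rho\setminus B_\delta}\!\cdots_\phi\,K\,dy,
\]
and obtains existence directly by monotone convergence on the first (sign-definite) piece; nonnegativity is then a separate short step. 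You instead first extract $\liminf_{\rho\to 0}B_\rho\geq 0$ and then run a Cauchy-type argument on $B_{\rho_2}-B_{\rho_1}\leq A_{\rho_1}-A_{\rho_2}$ to force $\limsup=\liminf$. This is just the integrated form of the same monotonicity observation, so the two proofs are essentially equivalent; your packaging has the minor advantage of handling existence and nonnegativity in one sweep.
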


\begin{proof}
Without loss of generality we may assume that $x = 0$ and $u(0)=0$. For $\rho \in (0,r)$, define  
\[
\phi_\rho(y) := \begin{cases}
 \phi(y), & y \in B_\rho, \\
  u(y), & y \in \ern \setminus B_\rho.
\end{cases}
\]
First, we show that the principal value exists. 
Setting $K(y) = K(0,y)$, we have
\begin{align*} 
\int_{B_\rho \setminus B_\delta} |u(y)|^{p-2} u(y) K(y) \, dy
&= \int_{B_\rho \setminus B_\delta} \left(|u(y)|^{p-2} u(y)-|\phi(y)|^{p-2}\phi(y)\right) K(y) \, dy \\
&\quad + \int_{B_\rho \setminus B_\delta} |\phi(y)|^{p-2}\phi(y) K(y) \, dy \\[1ex]
& =: I_{1,\delta}+I_{2,\delta}
\end{align*}
whenever $0<\delta<\rho<r$. Since $u\geq\phi$ in $B_\rho$, the integrand of $I_{1,\delta}$ is nonnegative and the limit $\lim_{\delta \to 0}I_{1,\delta}$ exists by the monotone convergence theorem. For $I_{2,\delta}$, in turn, the limit $\lim_{\delta \to 0}I_{2,\delta}$ exists
by Definition \ref{def:viscosity super}(iii). 
In addition,
\[
\int_{\ern \setminus B_\rho} |u(y)|^{p-2} u(y) K(y) \, dy > -\infty
\]
by the fact $u_- \in L^{p-1}_{sp}(\ern)$, and thus the principal value $\L u(0)$ exists.

Let us then show that $\L u(0) \geq 0$. Let $\eps>0$. By Lemma \ref{lemma:smallness of intBeps1} and Lemma \ref{lemma:smallness of intBeps2} we can take $\rho$ to be so small that
\[
\left| {\rm P.V.}\int_{B_\rho} |\phi(y)|^{p-2} \phi(y) K(y) \, dy \right| < \eps.
\]
Consequently, we can estimate
\begin{align*}
&\int_{\ern \setminus B_\rho} |u(y)|^{p-2} (-u(y)) K(y) \, dy = \int_{\ern \setminus B_\rho} |\phi_\rho(y)|^{p-2} (-\phi_\rho(y)) K(y) \, dy \\
&\qquad = {\rm P.V.}\int_{\ern} |\phi_\rho(y)|^{p-2} (-\phi_\rho(y)) K(y) \, dy - {\rm P.V.}\int_{B_\rho} |\phi(y)|^{p-2} (-\phi(y)) K(y) \, dy \\
&\qquad \geq \L \phi_\rho(0)- \eps \geq -\eps
\end{align*}
by Definition \ref{def:viscosity super}(iii). Hence letting $\rho \to 0$ yields $\L u(0) \geq -\eps$, and the claim follows by letting $\eps \to 0$.
\end{proof}

\section{Comparison principle} \label{s.comp}

In this section, we prove a weak comparison principle for viscosity solutions. This is one of the keys to our main result. First, a small lemma related to integrable functions is stated and proved.
\begin{lemma} \label{lemma:ae convergence uxz}
Let $p>1$ and let $u$ be a measurable function 
with $u_- \in L^{p-1}_{\rm loc}(\ern)$.
Let $\{x_k\}$ be a sequence in $\ern$ converging to $x \in \ern$. Then there exists a subsequence $\{x_{k_j}\}_j$ such that
\begin{align} \label{eq:ae convergence uxz}
\liminf_{j \to \infty} u(x_{k_j}+z) \geq u(x+z) \quad \text{for a.e. } z \in \ern.
\end{align}
\end{lemma}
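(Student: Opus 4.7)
The plan is to show the stronger statement that $u(x_{k_j}+z) \to u(x+z)$ for a.e.\ $z$ along some subsequence, which yields \eqref{eq:ae convergence uxz} with equality. Writing $u = u_+ - u_-$, I will handle the two parts separately, in each case reducing to the classical fact that translations are continuous in $L^1_{\rm loc}(\R^n)$: if $g \in L^1_{\rm loc}(\R^n)$ and $y_k \to 0$, then $g(\cdot + y_k) \to g$ in $L^1(K)$ for every compact $K \subset \R^n$ (apply $L^1$-continuity of translations to $g \cdot \chi_{K+B_1}$).

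For the negative part, the hypothesis $u_- \in L^{p-1}_{\rm loc}(\R^n)$ is exactly $(u_-)^{p-1} \in L^1_{\rm loc}(\R^n)$, so $(u_-(\cdot + x_k))^{p-1} \to (u_-(\cdot + x))^{p-1}$ in $L^1_{\rm loc}$. Exhausting $\R^n$ by concentric balls and diagonalizing, I extract a subsequence (not relabeled) along which $(u_-(x_k+z))^{p-1} \to (u_-(x+z))^{p-1}$ for a.e.\ $z \in \R^n$; continuity of $t \mapsto t^{1/(p-1)}$ on $[0,\infty)$ then gives $u_-(x_k+z) \to u_-(x+z)$ a.e. For the positive part, for each $M \in \N$ the truncation $\min(u_+, M)$ is bounded and measurable, hence belongs to $L^1_{\rm loc}(\R^n)$, and the same reasoning yields a further subsequence along which $\min(u_+(x_k+z), M) \to \min(u_+(x+z), M)$ a.e. A final diagonal extraction over $M \in \N$ (legitimate since a countable union of null sets is null) produces one subsequence $\{x_{k_j}\}$ along which the convergence of $u_-$ and of every truncation of $u_+$ holds simultaneously for a.e.\ $z$.

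At any such $z$, the inequality $u_+(x_{k_j}+z) \geq \min(u_+(x_{k_j}+z), M)$ gives $\liminf_j u_+(x_{k_j}+z) \geq \min(u_+(x+z), M)$; letting $M \to \infty$ yields $\liminf_j u_+(x_{k_j}+z) \geq u_+(x+z)$, covering both $u_+(x+z) < \infty$ and $u_+(x+z) = +\infty$. Combining with $u_-(x_{k_j}+z) \to u_-(x+z)$ then produces $\liminf_j u(x_{k_j}+z) \geq u_+(x+z) - u_-(x+z) = u(x+z)$ a.e., which is \eqref{eq:ae convergence uxz}. The argument contains no serious obstacle; the only bookkeeping subtlety is the nested diagonal extraction (over the compact exhaustion of $\R^n$ and over the countable index $M$), which is entirely routine.
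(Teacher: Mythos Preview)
Your proof is correct and follows essentially the same route as the paper: truncation to land in $L^{p-1}_{\rm loc}$ (resp.\ $L^1_{\rm loc}$), continuity of translations, and a diagonal extraction over a countable family of truncations and an exhaustion of $\R^n$. The only cosmetic difference is that the paper truncates $u$ directly via $u_m=\min\{u,m\}$ (which automatically controls both the positive and negative parts), whereas you split $u=u_+-u_-$ and truncate only $u_+$; note also that your opening sentence promises full a.e.\ convergence of $u$, but what you actually (and correctly) establish is the $\liminf$ inequality, which is all that is needed.
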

\begin{proof}
For every positive integer $m$, denote by $u_m := \min\{u,m\} \in L^{p-1}_{\rm loc}(\ern)$ and
let $\{K_m\}$ be a compact exhaustion of $\ern$.
Then, it is a well known fact that for each $m$
\[
\lim_{k\to\infty} \int_{K_m} |u_m(x_k+z)-u_m(x+z)|^{p-1}\,dz = 0.
\]
Hence, using a diagonal argument, we can extract a subsequence $\{x_{k_j}\}$ such that for every $m$
\begin{align*} \label{eq:ae convergence uxz2}
\lim_{j \to \infty} u_m(x_{k_j}+z) = u_m(x+z) \quad \text{for a.e. } z \in K_m.
\end{align*}
Now we can estimate
\[
\liminf_{j \to \infty}u(x_{k_j}+z) \geq \liminf_{j \to \infty}u_m(x_{k_j}+z)=u_m(x+z)
\]
for almost every $z\in K_m$, and finally \eqref{eq:ae convergence uxz} follows by letting $m \to \infty$. 
\end{proof}


\begin{theorem}[{\bf Comparison principle of viscosity solutions}] \label{thm:comparison}
Let $u$ and $v$ be an $(s,p)$-viscosity supersolution and an $(s,p)$-viscosity subsolution, respectively, in $\Omega$. Assume further that both $v$ and $-u$ are upper semicontinuous in $\overline \Omega$ and $u \geq v$ on $\partial\Omega$ and almost everywhere in $\ern \setminus \Omega$. Then $u \geq v$ in $\Omega$. 
\end{theorem}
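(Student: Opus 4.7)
We argue by contradiction via a doubling-of-variables argument. Assume $M := \sup_{\Omega}(v - u) > 0$; by upper semicontinuity of $v-u$ on $\overline\Omega$ and the boundary hypothesis $u\geq v$ on $\partial\Omega$, the supremum is attained at some interior point $\hat x \in \Omega$. The guiding heuristic is the smooth-case identity
\[
v(\hat x) - v(z) = [u(\hat x) - u(z)] + [M - (v-u)(z)],
\]
which, together with strict monotonicity of $t\mapsto|t|^{p-2}t$ and the fact that $(v-u)(z)<M$ on a set of positive measure (guaranteed by $u\geq v$ a.e.\ on $\R^n\setminus\Omega$), would give $\L v(\hat x) > \L u(\hat x)$, contradicting $\L v(\hat x)\leq 0\leq \L u(\hat x)$. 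The task is to realize this in the viscosity framework.

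\textbf{The doubling.} Fix an exponent $q > \max\{2,\,sp/(p-1)\}$ and parameters $j\in\N$, $\delta>0$, and set
\[
\Phi_j(x,y) := v(x) - u(y) - \tfrac{j}{q}|x-y|^q - \tfrac{\delta}{q}\bigl(|x-\hat x|^q + |y-\hat x|^q\bigr).
\]
By upper semicontinuity, $\Phi_j$ attains its maximum at some $(x_j, y_j) \in \overline\Omega\times\overline\Omega$. Standard arguments, using $\Phi_j(x_j,y_j)\geq\Phi_j(\hat x,\hat x)\to M$ together with the boundary hypothesis, yield $x_j,y_j\to\hat x$, $j|x_j-y_j|^q\to 0$, and $(x_j, y_j)\in\Omega\times\Omega$ for $j$ large. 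The maximum property shows that $v$ is touched from above at $x_j$ by
\[
\phi_j(x) := \tfrac{j}{q}|x-y_j|^q + \tfrac{\delta}{q}|x-\hat x|^q + c_j,
\]
and $u$ from below at $y_j$ by
\[
\psi_j(y) := -\tfrac{j}{q}|x_j-y|^q - \tfrac{\delta}{q}|y-\hat x|^q + c_j'.
\]
Whenever the gradient at the contact point is nonzero we are in case (a) of Definition~\ref{def:viscosity super}(iii); in the only degenerate configuration $x_j=y_j=\hat x$, the contact point is an isolated critical point and $\phi_j,\psi_j\in C^2_q$ with $q>sp/(p-1)$, so case (b) applies. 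Extending $\phi_j,\psi_j$ by $v,u$ outside small balls gives $\L\tilde\phi_j(x_j) \leq 0 \leq \L\tilde\psi_j(y_j)$.

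\textbf{Subtracting.} Translation invariance of $K$ is crucial: the substitutions $z=x_j+w$ in $\L\tilde\phi_j(x_j)$ and $z=y_j+w$ in $\L\tilde\psi_j(y_j)$ put both operators over a common $w$ with kernel $K(w)$. The penalty $\tfrac{j}{q}|x-y|^q$ is invariant under diagonal shifts, so $\Phi_j(x_j+w,y_j+w)\leq\Phi_j(x_j,y_j)$ yields
\[
[v(x_j+w)-v(x_j)] - [u(y_j+w)-u(y_j)] \leq C\delta(1+|w|^{q-1})|w|
\]
for admissible $w$. Invoking Lemma~\ref{lemma:ae convergence uxz} for pointwise a.e.\ convergence of shifted traces along a subsequence, the tail summability $u_-,v_+\in L^{p-1}_{sp}(\R^n)$ for dominated convergence in the tail, Lemmas~\ref{lemma:smallness of intBeps1}--\ref{lemma:smallness of intBeps2} for absorbing the local contribution near the diagonal, and strict monotonicity of $t\mapsto|t|^{p-2}t$, subtracting the viscosity inequalities and sending first $j\to\infty$ and then $\delta\to 0$ produces
\[
0 \;\leq\; \L\tilde\psi_j(y_j) - \L\tilde\phi_j(x_j) \;\leq\; -\eta + C\delta + o_j(1)
\]
for some $\eta > 0$ extracted from the positive-measure set $\{w:(v-u)(\hat x+w)<M\}$. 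This is the desired contradiction.

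\textbf{Main obstacle.} Unlike the local case, there is no Hessian-ellipticity to drive the contradiction; the strict defect $\eta$ must come entirely from the nonlocal integral via strict monotonicity of the $p$-difference nonlinearity on a positive-measure subset of $\R^n\setminus\Omega$. Justifying the limit $j\to\infty$ for the nonlinear integrands, which are not uniformly controlled near the diagonal, requires the tail summability and the continuity results of Section~\ref{ss.PV}. In the singular range $1<p\leq 2/(2-s)$, one additionally must verify $\phi_j,\psi_j\in C^2_\beta$ with $\beta>sp/(p-1)$ in the degenerate configuration $x_j=y_j=\hat x$, which is secured by the choice of $q$.
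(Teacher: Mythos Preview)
Your proposal follows the same doubling-of-variables strategy as the paper, exploiting translation invariance of $K$ to put both operators over a common integration variable and using the diagonal-shift inequality. Two points deserve comment.

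\medskip
\noindent\textbf{A genuine gap.} The extra localization term $\tfrac{\delta}{q}\bigl(|x-\hat x|^q+|y-\hat x|^q\bigr)$, while standard in local viscosity theory, creates a real problem in the singular range $1<p\le \tfrac{2}{2-s}$. Your test function
\[
\phi_j(x)=\tfrac{j}{q}|x-y_j|^q+\tfrac{\delta}{q}|x-\hat x|^q+c_j
\]
is, whenever $y_j\ne\hat x$, a strictly convex sum of two radial $q$-powers with distinct centers. Its unique critical point $c_j$ lies strictly between $y_j$ and $\hat x$, and $D^2\phi_j(c_j)$ is positive definite; consequently $|D^2\phi_j(x)|/d_{\phi_j}(x)^{\beta-2}\to\infty$ as $x\to c_j$ for every $\beta>2$. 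Since $p\le \tfrac{2}{2-s}$ forces $\tfrac{sp}{p-1}\ge 2$, case~(b) of Definition~\ref{def:viscosity super}(iii) requires $\beta>2$, and hence $\phi_j\notin C^2_\beta$ for any admissible $\beta$ near $c_j$. Your assertion that the only degenerate configuration is $x_j=y_j=\hat x$ is therefore incorrect: nothing rules out $x_j=c_j$ with $y_j\ne\hat x$, and at such a contact point no viscosity test is available. The paper avoids this entirely by \emph{omitting} the $\delta$-term. With a single penalty $\tfrac{1}{\eps}|x-y_\eps|^q$, the test function has its sole critical point at $y_\eps$; if $x_\eps=y_\eps$ it is genuinely of the form $|x-x_\eps|^q\in C^2_q$, and otherwise $\nabla\phi_\eps(x_\eps)\ne 0$.

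\medskip
\noindent\textbf{A different endgame.} You aim to extract the contradiction from a fixed defect $\eta>0$ contributed by the tail over $\R^n\setminus\Omega$. The paper instead shows only that the exterior contribution is $\ge 0$, and then applies Fatou on the \emph{interior} piece to deduce $v-u\ge\sigma$ a.e.\ in $\Omega$, which contradicts $u\ge v$ on $\partial\Omega$ by upper semicontinuity of $v-u$ on $\overline\Omega$. The paper's route is more robust: Lemma~\ref{lemma:ae convergence uxz} only furnishes one-sided a.e.\ control of the shifted traces $u(y_\eps+\cdot)$ and $v(x_\eps+\cdot)$, which makes pinning down a uniform lower bound $\eta>0$ along the sequence (as you need) delicate. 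The paper further streamlines matters via Proposition~\ref{prop:principal value}: once an admissible test function touches $v$ from above at $x_\eps$, the principal value $\L v(x_\eps)$ itself exists and is $\le 0$, so one works directly with $\L v(x_\eps)-\L u(y_\eps)$ and never needs to carry the extended test functions $\tilde\phi_j,\tilde\psi_j$ or the associated local error terms.
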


\begin{proof}
Assume contrary to the claim that there is a point $x_0 \in \Omega$ such that 
\[
\sigma := \sup_{\Omega} (v-u) = v(x_0) - u(x_0)>0.
\]
Set
\[
H := \sup_{\Omega}v  - \inf_{\Omega}u,
\] 
which is a finite number by the assumed semicontinuity properties. Thus also $\sigma$ is finite. Define 
\[
\Psi_\eps(x,y) := v(x) - u(y) - \frac{1}{\eps} |x-y|^q,
\]
where $q=2$ if  $p>\frac{2}{2-s}$ and $q > \frac{sp}{p-1}$ otherwise, and let 
\[
M_\eps := \sup_{x,y \in  \Omega} \Psi_\eps(x,y).
\]
Clearly $M_\eps \leq H$ and $M_\eps \geq \Psi_\eps(x_0,x_0) = \sigma$. Moreover, since $\Psi_{\eps_1}(x,y) \leq \Psi_{\eps_2}(x,y)$ whenever $\eps_1 \leq \eps_2$, we see that $M_{\eps_1} \leq M_{\eps_2}$. 
Therefore, we have that $M := \lim_{\eps \to 0} M_\eps$ exists by the uniform lower bound $M_\eps \geq \sigma$. Furthermore, by the upper semicontinuity of $v$ and $-u$, for any $\eps>0$ there are points $x_\eps,y_\eps \in \overline \Omega$  such that 
\[
M_\eps = \Psi_\eps(x_\eps,y_\eps).
\]

Let us now analyze the limit. Firstly, we see that 
\[
M_{2\eps} \geq \Psi_{2\eps}(x_{\eps},y_{\eps}) = M_{\eps} + \frac{1}{2\eps} |x_\eps-y_\eps|^q,
\]
and thus 
\begin{equation}
\label{eq:Meps}
\frac{1}{\eps} |x_\eps-y_\eps|^q \leq 2(M_{2\eps}-M_{\eps}) \to 0
\end{equation}
as $\eps \to 0$. Secondly, let $x^*\in \overline \Omega$ be any accumulation point of $\{x_\eps\}$.  Then there is a subsequence $\{x_{\eps_j}\}_j$ such that $x_{\eps_j} \to x^*$ as $j \to \infty$. Also $y_{\eps_j} \to x^*$ as $j \to \infty$ by \eqref{eq:Meps}. The upper semicontinuity of $v$ and $-u$ then implies
\begin{align*} \label{eq:} 
\sigma &\leq \lim_{j \to \infty} M_{\eps_j} = \lim_{j \to \infty} \big(v(x_{\eps_j}) - u(y_{\eps_j})\big)
-\lim_{j \to \infty} \frac{1}{\eps_j} |x_{\eps_j}-y_{\eps_j}|^q \\
&\leq  \limsup_{j \to \infty} \big(v(x_{\eps_j}) - u(y_{\eps_j})\big) \leq v(x^*) - u(x^*) \leq \sigma.
\end{align*}
Therefore $x^*$ must be in $\Omega$, because otherwise the boundary condition would be violated. Upon relabeling the subsequence $\{x_{\eps_j}\}$ as $\{x_\eps\}$, we thus have
\[
\lim_{\eps \to 0} \big(v(x_{\eps}) - u(y_{\eps})\big)  = v(x^*) - u(x^*) = \sup_{\Omega} (v-u).
\]

From now on, we assume that $\eps$ is so small that $x_\eps,y_\eps \in B_{r}(x^*)$ for $r:= \frac13 \dist(x^*,\partial \Omega)$. We introduce the set
\[
E_{y} := \{ z \in \ern :  y + z \in \Omega \}, \qquad y \in \Omega.
\]
Let us continue with further consequences of the definitions above. Since 
\[
\Psi_\eps(x_\eps,y_\eps) \geq \Psi_\eps(x_\eps+z,y_\eps + z),
\]
we obtain
\[
v(x_\eps) - u(y_\eps) - \frac{1}{\eps} |x_\eps-y_\eps|^q  \geq v(x_\eps+z) - u(y_\eps+z) - \frac{1}{\eps} |x_\eps-y_\eps|^q 
\]
for all $z \in E_{x_\eps} \cap E_{y_\eps}$. In particular,
\begin{equation} \label{eq:W_eps 1}
W_\eps(z) := v(x_\eps) - v(x_\eps+z) - u(y_\eps) + u(y_\eps+z) \geq 0 
\end{equation}
for all such $z$. Moreover, as 
\[
\Psi_\eps(x_\eps,y_\eps) \geq \Psi_\eps(x_\eps + z,y_\eps) \qquad \mbox{and} \qquad \Psi_\eps(x_\eps,y_\eps) \geq \Psi_\eps(x_\eps,y_\eps+z) 
\]
hold for all $z \in B_{2r}(0)$, we also see that 
\begin{align*} 
v(x_\eps + z) &\leq v(x_\eps) - \frac{1}{\eps} |x_\eps-y_\eps|^q + \frac{1}{\eps} |x_\eps+z-y_\eps|^q
\end{align*}
and
\begin{align*} 
u(y_\eps + z) &\geq u(y_\eps) + \frac{1}{\eps} |x_\eps-y_\eps|^q - \frac{1}{\eps} |x_\eps-y_\eps-z|^q 
\end{align*}
for all such $z$. Thus, there are $C^2$-functions
\[
\phi_\eps(x):=v(x_\eps)-\frac1\eps|x_\eps-y_\eps|^{q}+\frac1\eps|x-y_\eps|^{q}
\]
and
\[
\psi_\eps(y):=u(y_\eps)+\frac1\eps|x_\eps-y_\eps|^{q}-\frac1\eps|x_\eps-y|^{q}
\]
touching $v$ from above at $x_\eps$ and $u$ from below at $y_\eps$, respectively. In addition, if $\nabla \phi_\eps(x_\eps)=0$ or $\nabla \psi_\eps(y_\eps)=0$, then $x_\eps=y_\eps$ and it is an isolated critical point for both $\phi_\e$ and $\psi_\e$.
Moreover, if $1<p\leq \frac{2}{2-s}$, then clearly $\phi_\e,\psi_\e \in C^{2}_{q}(\Omega)$.

Since $v$ and $u$ are an $(s,p)$-viscosity subsolution and a supersolution, respectively, we have from Proposition \ref{prop:principal value} that $\mathcal{L} v(x_\eps) \leq 0$ and $\mathcal{L} u(y_\eps) \geq 0$ in the pointwise sense.
Furthermore, using the translation invariance of $K$ and performing a change of variables $z=x-x_\eps$ we get
\begin{align*}
0 \geq \mathcal{L} v(x_\eps) &= \int_{\ern} |v(x_\eps) - v(x)|^{p-2} (v(x_\eps) - v(x)) K(x_\eps,x) \, dx \\
&= \int_{\ern} |v(x_\eps) - v(x_\eps+z)|^{p-2} \big(v(x_\eps) - v(x_\eps+z)\big) K(z,0) \, dz,
\end{align*}
and, similarly,
\begin{align*}
0 \leq \mathcal{L} u(y_\eps) &= \int_{\ern} |u(y_\eps) - u(x)|^{p-2} (u(y_\eps) - u(x)) K(y_\eps,x) \, dx \\
&= \int_{\ern} |u(y_\eps) - u(y_\eps+z)|^{p-2} \big(u(y_\eps) - u(y_\eps+z)\big) K(z,0) \, dz.
\end{align*}
Therefore,
\begin{align}  \label{thetae bound}
0 \geq  \mathcal{L} v(x_\eps) - \mathcal{L} u(y_\eps) = \int_{\ern} \Theta_\eps(z)  \, d\nu(z),
\end{align}
where
\begin{align*} 
\Theta_\eps(z) := \,&|v(x_\eps) - v(x_\eps+z)|^{p-2} \big(v(x_\eps) - v(x_\eps+z)\big) \\
& - |u(y_\eps) - u(y_\eps+z)|^{p-2} \big(u(y_\eps) - u(y_\eps+z)\big) \nonumber
\end{align*}
and
\begin{align*}
d\nu(z) := K(z,0)\,dz.
\end{align*}

Decompose now $\ern$ as
\begin{align*}
\ern = (E_{x_\eps} \cap E_{y_\eps}) \cup \big(\R^n\setminus ( E_{x_\eps} \cap E_{y_\eps})\big) 
=: E^{1,\eps} \cup E^{2,\eps}.
\end{align*}
Straightforward manipulations show that 
\begin{align} \label{Thetaez2}
\Theta_\eps(z) &= (p-1)\int_0^1 \left| t\big(v(x_\eps) - v(x_\eps+z)\big) + (1-t)\big(u(y_\eps) - u(y_\eps+z)\big)\right|^{p-2}  \,dt \nonumber \\
&\qquad \qquad \times \big(v(x_\eps) - v(x_\eps+z) - u(y_\eps) + u(y_\eps+z)\big)  \\
&= (p-1)\int_0^1 | u(y_\eps) - u(y_\eps+z) + t W_\eps(z)|^{p-2}  \, dt \, W_\eps(z)\nonumber
\end{align}
with $W_\eps$ defined in \eqref{eq:W_eps 1}.
Thus, due to Lemma \ref{lemma:stupid integral 2} and the nonnegativity of $W_\eps(z)$ whenever $z \in E^{1,\eps}$,
\[
\Theta_\eps(z) \geq \frac1c \big(| u(y_\eps) - u(y_\eps+z)| + W_\eps(z) \big)^{p-2}W_\eps(z), 
\] 
for all $z \in E^{1,\eps}$. Therefore 
\begin{align} \label{E1e bound}
\nonumber \liminf_{\eps \to 0} &\int_{E^{1,\eps}} \Theta_\eps \, d\nu  \\
\geq &\frac1c \liminf_{\eps \to 0}  \int_{E^{1,\eps}} 
 \big(| u(y_\eps) - u(y_\eps+z)| + W_\eps(z) \big)^{p-2}W_\eps(z)  \, d\nu(z).
\end{align}

Now consider the set $E^{2,\eps}$. Since $u_-, v_+ \in L^{p-1}_{sp}(\ern)$, we can by Lemma \ref{lemma:ae convergence uxz}, upon extracting a subsequence, assume
\begin{align} \label{eq:E3 convergence}
\liminf_{\eps \to 0} W_\eps(z) \geq \sigma + u(x^*+z)- v(x^*+z) 
\end{align}
for almost every $z$ due to the pointwise convergence of $v(x_\e)-u(y_\e)$.
By picking yet another subsequence, we can also assume
$$
\chi_{E^{2,\eps}}\to \chi_{\R^n\setminus E_{x^*}}
$$
almost everywhere as $\eps \to 0$,
and thus by the order of boundary values for $u$ and $v$
\begin{align} \label{Wnonneg}
\liminf_{\eps \to 0} \big[W_\eps(z)\chi_{E^{2,\eps}}(z)\big] &\geq \big(\sigma + u(x^*+z)- v(x^*+z)\big)\chi_{\R^n\setminus E_{x^*}}(z) \nonumber \\
&\geq \sigma\,\chi_{\R^n\setminus E_{x^*}}\!(z)
\end{align}
for almost every $z$. Since, in addition, $|z|^{-1}$ is bounded in $E^{2,\eps}$ and $u_-, v_+ \in L^{p-1}_{sp}(\ern)$, it is now easy to see that $\Theta_\e\chi_{E^{2,\eps}}$ is bounded from below by a uniformly integrable function. Indeed, for the part involving $v$ we have
\begin{align*}
|v(x_\e)-v(x_\e+z)|^{p-2}(v(x_\e)-v(x_\e+z)) \geq -c\,\big(|v(x_\e)|^{p-1}+|v_+(x_\e+z)|^{p-1}\big),
\end{align*}
and
\begin{align*}
\int_{E^{2,\eps}} \big(|v(x_\e)|^{p-1} + |v_+(x_\e {+} z)|^{p-1}\big)\,d\nu(z)
&\leq c\,r^{-sp}\left(|v(x_\eps)|^{p-1} + {\rm Tail}(v_+;x_\eps,2r)^{p-1}\right) \\
&\leq c\,r^{-sp}\left(|v(x_\eps)|^{p-1} + {\rm Tail}(v_+;x^*,r)^{p-1}\right),
\end{align*}
where we have used that $|z|>2r$ in $E^{2,\eps}$, since  $x_\e,y_\e \in B_r(x^*)$. The second term can easily be seen to be uniformly bounded since $v_+ \in L^{p-1}_{sp}(\ern)$. The first term is uniformly bounded by the fact $v(x^*)-u(x^*)=\sigma$ together with the semicontinuity and finiteness of $v$ and $u$ in $\Omega$.
The part involving $u$ can be treated similarly.

By \eqref{Wnonneg}, $\chi_{\{W_\e<0\}}\chi_{E^{2,\e}}\to 0$ almost everywhere, so that the dominated convergence theorem implies
$$
\lim_{\eps \to 0}\int_{E^{2,\e}} \Theta_\eps \chi_{\{W_\e<0\}} \, d\nu =0.
$$
By  \eqref{Thetaez2}, Lemma \ref{lemma:stupid integral 2}, and Fatou's lemma, in turn,
\begin{align*}
&\liminf_{\eps \to 0}\int_{E^{2,\e}} \Theta_\eps\chi_{\{W_\e\geq 0\}}  \, d\nu \\
&\qquad \geq \frac1c \liminf_{\eps \to 0}  \int_{E^{2,\eps}} 
 \big(| u(y_\eps) - u(y_\eps+z)| + W_\eps(z) \big)^{p-2}W_\eps(z)\chi_{\{W_\e\geq 0\}}  \, d\nu(z)\geq 0.
\end{align*}
Hence, we conclude
\begin{align} \label{E2e bound}
\liminf_{\eps \to 0}\int_{E^{2,\e}} \Theta_\eps  \, d\nu\geq 0.
\end{align}

Combining inequalities \eqref{thetae bound}, \eqref{E1e bound}, and \eqref{E2e bound}, we see by Fatou's lemma together with the nonnegativity of $W_\eps$ in $E^{1,\eps}$ that 
\[
 \int_{\ern} \liminf_{\eps \to 0} \left[\chi_{E^{1,\eps}}(z) \big(| u(y_\eps) - u(y_\eps+z)| + W_\eps(z) \big)^{p-2}W_\eps(z) \right] d\nu(z) \leq 0.
\]
But this is only possible if 
\begin{align*}
0 &= \liminf_{\eps \to 0} W_\eps(z) = v(x^*) -u(x^*) - \limsup_{\eps \to 0}\big(v(x_\eps+z) -u(y_\eps+z)\big) \\
&\geq \sigma - \big(v(x^*+z) -u(x^*+z)\big)
\end{align*}
for almost every $z \in E_{x^*}$ by the upper semicontinuity of $v$ and $-u$ in $\overline \Omega$. Thus $v-u \geq \sigma$ almost everywhere in $\Omega$. The upper semicontinuity of $v-u$ in $\overline \Omega$ then implies for $x \in \partial \Omega$ that 
\[
0 \geq v(x) - u(x) \geq \limsup_{y \to x} \big(v(y) - u(y)\big) \geq \sigma>0;
\]
a contradiction. This finishes the proof.
\end{proof}

\section{Proof of the main result} \label{s.main}

In this section, we prove our main result which states that the notions of $(s,p)$-superharmonic functions and $(s,p)$-viscosity supersolutions coincide.
First, we will need the following two lemmas.
\begin{lemma}\label{lem:minsup} Suppose $u$ and $v$ are $(s,p)$-superharmonic in $\Omega$. Then the function $w=\min\{u,v\}$ is also $(s,p)$-superharmonic in $\Omega$.
\end{lemma}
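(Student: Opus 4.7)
The plan is to directly verify that $w = \min\{u,v\}$ satisfies each of the four conditions (i)--(iv) in Definition~\ref{def:superharmonic}. Conditions (i), (ii), and (iv) follow immediately from elementary set-theoretic and measure-theoretic considerations, while condition (iii) is the heart of the matter and reduces cleanly to applying the comparison property of $u$ and $v$ separately against the same test solution.

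For conditions (i) and (ii): since $u,v < +\infty$ almost everywhere in $\R^n$, so is $w$; since $u,v > -\infty$ everywhere in $\Omega$, so is $w$; and the pointwise minimum of two lower semicontinuous functions is lower semicontinuous. For condition (iv), I would use the identity
\[
w_- = \max\{u_-,v_-\} \leq u_- + v_-,
\]
so that $w_- \in L^{p-1}_{sp}(\R^n)$ follows from the corresponding memberships of $u_-$ and $v_-$.

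The key step is (iii). Let $D \Subset \Omega$ be an open set, and let $h \in C(\overline{D})$ be a weak solution in $D$ with $w \geq h$ on $\partial D$ and almost everywhere in $\R^n \setminus D$. I would observe that $\min\{u,v\} \geq h$ is pointwise equivalent to the conjunction $u \geq h$ and $v \geq h$; in particular, the boundary and exterior inequalities transfer to both $u$ and $v$ individually (on $\partial D$ pointwise, and a.e.\ in $\R^n\setminus D$). Applying the comparison axiom (iii) for $u$ yields $u \geq h$ in $D$, and similarly for $v$. Taking the minimum then gives $w \geq h$ in $D$, as required.

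There is no genuine obstacle here: the argument is essentially bookkeeping, the only mildly delicate point being the verification that the almost-everywhere inequality $\min\{u,v\} \geq h$ in $\R^n \setminus D$ implies the two separate inequalities $u \geq h$ and $v \geq h$ almost everywhere there (which is immediate by splitting into the measurable sets $\{u\leq v\}$ and $\{u>v\}$). All four conditions then combine to give that $w$ is $(s,p)$-superharmonic in $\Omega$.
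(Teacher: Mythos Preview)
Your proposal is correct and is precisely the verification the paper has in mind when it writes ``The proof of this is obvious from the definition.'' You have simply spelled out the routine check of conditions (i)--(iv), with the comparison property (iii) being the only step requiring any thought, and your handling of it matches the intended argument.
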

The proof of this is obvious from the definition.
\begin{lemma}\label{lem:um} Suppose that $u$ is finite almost everywhere in $\Omega$ 
and that $u_M=\min\{u,M\}$ is an $(s,p)$-viscosity supersolution in $\Omega$ for each $M>0$. Then $u$ itself is an $(s,p)$-viscosity supersolution in $\Omega$.
\end{lemma}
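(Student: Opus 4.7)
The plan is to verify the four conditions of Definition \ref{def:viscosity super} for $u$ by leveraging the hypothesis that $u_M = \min\{u, M\}$ is an $(s,p)$-viscosity supersolution for every $M > 0$. The first three conditions of the definition are essentially for free: $u \geq u_M > -\infty$ in $\Omega$ together with the assumption $u < +\infty$ a.e. gives (i); since $u_M$ is lower semicontinuous for every $M$ and $u = \sup_{M > 0} u_M$, the supremum of lower semicontinuous functions is lower semicontinuous and (ii) follows; and because $(u_M)_- = u_-$ for all $M > 0$, condition (iv) is immediate from the corresponding condition for $u_M$.

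The real work is condition (iii). Fix a test function $\phi \in C^{2}(B_r(x_0))$ with $\phi(x_0) = u(x_0)$, $\phi \leq u$ in $B_r(x_0)$, satisfying either (a) or (b). Since $u(x_0)$ is finite, I would pick $M > u(x_0) = \phi(x_0)$; by continuity of $\phi$ there is $r' \in (0,r)$ such that $\phi < M$ on $B_{r'}(x_0)$, and hence $\phi \leq u_M$ on $B_{r'}(x_0)$ with equality at $x_0$. Applying the viscosity supersolution property for $u_M$ then yields $\L \phi_{r'}^{(M)}(x_0) \geq 0$, where $\phi_{r'}^{(M)}$ equals $\phi$ in $B_{r'}(x_0)$ and $u_M$ in the complement.

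The remaining step is to pass $M \to \infty$. The principal value over $B_{r'}(x_0)$ is independent of $M$ and is finite (and unchanged) thanks to Lemmas \ref{lemma:smallness of intBeps1}, \ref{lemma:smallness of intBeps2} and \ref{lemma:Lphi cont}, so only the tail integral needs attention. The tail integrand
\[
|\phi(x_0) - u_M(y)|^{p-2}\bigl(\phi(x_0) - u_M(y)\bigr)K(x_0,y)
\]
is pointwise nonincreasing in $M$. Splitting into positive and negative parts, the positive part $[\phi(x_0) - u_M(y)]_+^{p-1}$ is dominated by $C\bigl(|\phi(x_0)|^{p-1} + u_-(y)^{p-1}\bigr)$, which is integrable against $K(x_0,y)\,dy$ over $\R^n \setminus B_{r'}(x_0)$ since $u_- \in L^{p-1}_{sp}(\R^n)$, so dominated convergence gives convergence to the corresponding integral for $u$; the negative part $[u_M(y) - \phi(x_0)]_+^{p-1}$ is nondecreasing in $M$, so monotone convergence gives a limit in $[0,+\infty]$ equal to the corresponding integral for $u$. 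The uniform bound $\L \phi_{r'}^{(M)}(x_0) \geq 0$ then forces the limit of the negative part to be finite, and hence the tail integral for $u$ exists and equals the limit. This gives $\L \phi_{r'}(x_0) \geq 0$, and the monotonicity of $t \mapsto |t|^{p-2}t$ together with $\phi \leq u$ on $B_r(x_0) \setminus B_{r'}(x_0)$ upgrades this to $\L \phi_r(x_0) \geq \L \phi_{r'}(x_0) \geq 0$.

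The main obstacle is the last step: a priori the tail integral for $u$ need not be finite, since we only have $u_- \in L^{p-1}_{sp}(\R^n)$ and not $u_+ \in L^{p-1}_{sp}(\R^n)$. Finiteness is extracted precisely from the uniform one-sided bound $\L \phi_{r'}^{(M)}(x_0) \geq 0$ supplied by the viscosity supersolution property for each $u_M$, which prevents the negative part from diverging in the limit.
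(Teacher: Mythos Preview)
Your proof is correct and follows essentially the same approach as the paper: verify (i), (ii), (iv) directly, then for (iii) touch $u_M$ from below with $\phi$ on a smaller ball, apply the viscosity inequality for $u_M$, and pass $M\to\infty$ in the tail via dominated convergence on the positive part and monotone convergence on the negative part. The only cosmetic difference is that the paper fixes $\rho\in(0,r)$ first and then sends $\rho\to r$ at the end, whereas you fix $r'$ and then use the monotonicity of $t\mapsto|t|^{p-2}t$ to compare $\L\phi_r(x_0)$ with $\L\phi_{r'}(x_0)$ directly; both arguments are equivalent.
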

\begin{proof} Clearly $u$ satisfies (i), (ii), and (iv) in Definition \ref{def:viscosity super}, and thus we only need to verify property (iii). 
To this end, take $\phi\in C^2(B_r(x_0))$ such that $\phi(x_0)=u(x_0)$, $\phi\leq u$ in $B_r(x_0) \subset \Omega$, and either (a) or (b) with $\beta > \frac{sp}{p-1}$ in Definition \ref{def:viscosity super}(iii) holds. Let $\rho \in (0,r)$. Since $\phi(x_0)=u(x_0)$ and $\phi$ is continuous, $\phi\leq M$ in $B_\rho(x_0)$ for $M$ large enough. 
Consequently, $\phi(x_0)=u_M(x_0)$ and $\phi\leq u_M$ in $B_\rho(x_0)$. Since $u_M$ is an $(s,p)$-viscosity supersolution, $\L \phi_{\rho,M}(x_0)\geq 0$ with 
$$
\phi_{\rho,M}=\begin{cases}
\phi & \text{ in }B_\rho(x_0),\\
u_M &\text{ in }\ern\setminus B_\rho(x_0).
\end{cases}
$$
Furthermore,
\begin{align*}
\L \phi_{\rho,M}(x_0)&= \int_{B_\rho(x_0)} |\phi(x_0)-\phi(y)|^{p-2}(\phi(x_0)-\phi(y)) K(x_0,y)\,dy \\
&\quad +\int_{\ern \setminus B_\rho(x_0)} |\phi(x_0)-u_M(y)|^{p-2}(\phi(x_0)-u_M(y)) K(x_0,y)\,dy,
\end{align*}
where the second integrand is uniformly bounded in $M$ by an integrable function in the set $\{u \leq \phi(x_0)\}$ since $u_-\in L^{p-1}_{sp}(\ern)$
and, on the other hand, monotone in $M$ in the set $\{u > \phi(x_0)\}$.
By the dominated and monotone convergence theorems, we can thus let $M\to \infty$ and obtain
$$
\L \phi_{\rho} (x_0)\geq 0, \quad
\phi_{\rho}=\begin{cases}
\phi & \text{ in }B_\rho(x_0),\\
u &\text{ in }\ern\setminus B_\rho(x_0).
\end{cases}
$$
Finally, letting $\rho \to r$, we get the desired conclusion.
\end{proof}

\begin{proof}[~Proof of Theorem \ref{thm:superharmvisc}]
Assume first that $u$ is $(s,p)$-superharmonic in $\Omega$.
The only property left to verify is property (iii) in Definition \ref{def:viscosity super}. Take $u_M=\min\{u,M\}$ as above.
Then $u_M$ is $(s,p)$-superharmonic by Lemma \ref{lem:minsup}. 
To show that $u_M$ is an $(s,p)$-viscosity supersolution, take $\phi\in C^2(B_r(x_0))$ such that $\phi(x_0)=u_M(x_0)$, $\phi\leq u_M$ in $B_r(x_0)$ and that either (a) or (b) with $\beta > \frac{sp}{p-1}$ in Definition \ref{def:viscosity super}(iii) holds. We need to prove that 
\begin{equation}\label{eq:Lpos}
\L \phi_r(x_0)\geq 0, \quad
\phi_{r}=\begin{cases}
\phi & \text{ in }B_r(x_0),\\
u_M &\text{ in }\ern\setminus B_r(x_0).
\end{cases}
\end{equation}
We argue towards a contradiction, assuming that~\eqref{eq:Lpos} fails.
Then, in particular, $\phi_r \in L^{p-1}_{sp}(\ern)$ since $\phi_r \leq M$, and thus $(\phi_r)_- \notin L^{p-1}_{sp}(\ern)$ would imply $\L \phi_r(x_0) = +\infty$.
By the continuity of $\L \phi_r$ (cf. Lemma \ref{lemma:Lphi cont}), we have $\L \phi_r \leq -\tau$ in $B_{\rho'}(x_0)$ for some $\tau>0$ and $\rho'\in (0,r)$. Moreover, by Lemma \ref{lemma:Lphi cont in eps} there exist $\theta>0$, $\rho \in (0,\rho')$, and $\eta \in C^{2}_0(B_{\rho/2}(x_0))$
with $0 \leq \eta \leq 1$, $\eta(x_0)=1$ such that $\psi:=\phi_r+\theta\eta$ satisfies
\[
\sup_{B_\rho(x_0)}|\L \phi_r-\L \psi|<\tau,
\]
and consequently
\[
\L \psi \leq 0 \quad \text{in } B_\rho(x_0).
\]
By Lemma \ref{lemma:super weak super} we have that $\psi$ is a weak subsolution in $B_\rho(x_0)$, 
and further $(s,p)$-subharmonic in $B_\rho(x_0)$ by continuity, according to Theorem \ref{thm_supersuper2}.
In addition, we have $\psi = \phi_r \leq u_M$ in $\R^n\setminus B_{\rho/2}(x_0)$. Now, $\psi \leq u_M$ in $B_{\rho/2}(x_0)$ by the comparison between $(s,p)$-superharmonic and $(s,p)$-subharmonic functions, i.e. Theorem \ref{comp principle}, together with the semicontinuity of $\psi$ and $u_M$ up to the boundary of $B_{\rho/2}(x_0)$.
This contradicts $\psi(x_0)=\phi(x_0)+\theta>u_M(x_0)$, verifying \eqref{eq:Lpos}. Thus, $u_M$ is an $(s,p)$-viscosity supersolution for any $M>0$ implying that $u$ itself is an $(s,p)$-viscosity supersolution, by Lemma \ref{lem:um}.

\smallskip

Assume now that $u$ is an $(s,p)$-viscosity supersolution in $\Omega$. The only property we need to verify is the comparison property (iii) of Definition \ref{def:superharmonic}. Let $D \Subset \Omega$ be an open set and take $v\in C(\overline D)$ to be a weak solution in $D$ such that $u\geq v$ on $\dd D$ and almost everywhere in $\R^n\setminus D$. It follows by the same arguments as in the first part of the proof that $v$ is an $(s,p)$-viscosity subsolution in $D$. From Theorem \ref{thm:comparison} we can conclude that $u\geq v$ in $D$, and thus $u$ is $(s,p)$-superharmonic.
\end{proof}

\begin{proof}[~Proof of Theorem \ref{thm:equivalence}] The proof now follows from Theorem \ref{thm:equivalence0}, Theorem \ref{thm_supersuper}, and Theorem \ref{thm_supersuper2}.
\end{proof}

\begin{proof}[~Proof of Theorem \ref{thm:viscosity super}] The proof follows from Theorem \ref{thm:equivalence0} and \cite[Theorem 1]{DKKP15}.
\end{proof}

\section{Acknowledgements}
Janne Korvenp\"a\"a has been supported by the Magnus Ehrnrooth Foundation (grant no. ma2014n1, ma2015n3). Tuomo Kuusi is supported by the Academy of Finland.  Erik Lindgren is supported by the Swedish Research Council, grant no. 2012-3124.

\bibliographystyle{amsrefs}
\bibliography{ref} 

\end{document}